\numberwithin{equation}{section}
\newcommand{\cU}{\mathcal{U}}
\newcommand{\N}{\mathbb{N}}
\newcommand{\R}{\mathbb{R}}
\newcommand{\Z}{\mathbb{Z}}
\newcommand{\asdim}{\operatorname{asdim}}
\newcommand{\diam}{\operatorname{diam}}
\newcommand{\qand}{\quad \textrm{and} \quad}
\DeclareMathOperator{\ind}{ind}
\theoremstyle{thm}
\newtheorem{theorem}{Theorem}[section]
\newtheorem{corollary}[theorem]{Corollary}
\newtheorem{proposition}[theorem]{Proposition}
\newtheorem{lemma}[theorem]{Lemma}
\newenvironment{usethmcounterof}[1]{%
  \theorem}{\endtheorem\addtocounter{theorem}{-1}}
\theoremstyle{definition}
\newtheorem{definition}[theorem]{Definition}
\newtheorem{remark}[theorem]{Remark}
\newtheorem{example}[theorem]{Example}
\patchcmd{\section}{-.5em}{.5em}{}{}
\patchcmd{\subsubsection}{-.5em}{.5em}{}{}
\let\@wraptoccontribs\wraptoccontribs
\begin{document}


\title[Hurewicz-type theorem for quasimorphisms of countable approximate groups]{A Hurewicz-type theorem for quasimorphisms of countable approximate groups}

\author{Vera Toni\'c}
\address{Faculty of Mathematics, University of Rijeka, Croatia}
\curraddr{}
\email{vera.tonic@math.uniri.hr}
\thanks{...}

\keywords{asymptotic dimension, approximate group, quasimorphism}

\subjclass[2020]{Primary: 51F30, 20F69; Secondary: 20N99}

\date{\today}

\begin{abstract} 
 In their theorem from 2006, A.~Dranishnikov and J.~Smith (\cite{DranSmith}) prove that if $f:G\to H$ is a group homomorphism, then the following formula for asymptotic dimension is true: $\asdim G \leq \asdim H + \asdim\ \! (\ker f)$. This result is known as the Hurewicz-type formula, after a 1927 theorem from classical dimension theory by W.~Hurewicz, which inspired it.

In this paper we establish a similar formula to the one by Dranishnikov and Smith, for the following setup: 
 whenever $(\Xi, \Xi^\infty)$ and $(\Lambda,\Lambda^\infty)$ are countable approximate groups and  $f:(\Xi, \Xi^\infty) \to (\Lambda,\Lambda^\infty)$ is a (general) quasimorphism, i.e., a quasimorphism which need not be symmetric nor unital, then the following formula is true:
$$\asdim \Xi \leq \asdim \Lambda + \asdim\ \! \left(f^{-1}\left( f(e_\Xi)D(f)^{-1}D(f)\right)\right), $$
where $D(f)$ is the defect set of $f$. 
It follows as a corollary that if $f:G\to H$ is a quasimorphism of countable groups, then $\asdim G\leq \asdim H +  \asdim\ \! \left(f^{-1}\left( f(e_\Xi)D(f)^{-1}D(f)\right)\right)$.
\end{abstract}

\maketitle

\section{Introduction}
The result in this paper is motivated by a wish to generalize a well-known
Hurewicz-type formula for asymptotic dimension and homomorphisms of groups, due to A.~Dranishnikov and J.~Smith
 (\cite[Theorem 2.3]{DranSmith}), which states:
\begin{theorem}[Dranishnikov and Smith]
\label{Thm: Hurewicz DS}
Let $f:G\to H$ be a  group homomorphism. Then
$\asdim G \leq \asdim H + \asdim\ \! (\ker f)$.
\end{theorem}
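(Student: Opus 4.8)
The plan is to derive Theorem~\ref{Thm: Hurewicz DS} from the Hurewicz-type \emph{mapping} theorem for asymptotic dimension (Bell--Dranishnikov), which I would take as the principal tool: if $\phi\colon X\to Y$ is a bornologous (coarsely Lipschitz) map of metric spaces with $\asdim Y\le n$, and if for every $R>0$ the family $\{\phi^{-1}(B_R(y)):y\in Y\}$ has asymptotic dimension $\le m$ \emph{uniformly}, then $\asdim X\le n+m$. Everything then reduces to fitting a group homomorphism into these hypotheses. First I would fix proper left-invariant metrics $d_G$ on $G$ and $d_H$ on $H$; since the asymptotic dimension of a countable group is independent of the choice of such a metric, and since subspaces satisfy $\asdim f(G)\le\asdim H$, it is harmless to work with any such pair and to replace $H$ by $f(G)$. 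The map $f$ is automatically bornologous: by left-invariance $f$ carries the ball $B_R(g)=g\,B_R(e_G)$ into the $d_H$-bounded set $f(g)\,f(B_R(e_G))$, and $f(B_R(e_G))$ is finite by properness, so $d_G(x,y)\le R$ forces $d_H(f(x),f(y))$ to stay bounded in terms of $R$ alone.

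Second, I would verify the uniform fibre condition, which is where the hypothesis $K:=\ker f$ enters. For fixed $R$ the ball $B_R(e_H)$ is finite, and by left-invariance $|B_R(h)|$ is a constant $c(R)$ independent of $h$; hence
\[ f^{-1}(B_R(h))=\bigsqcup_{h'\in B_R(h)\cap f(G)} f^{-1}(h') \]
is a disjoint union of at most $c(R)$ cosets of $K$. The decisive point is that for each $g$ the left translation $k\mapsto gk$ is a $d_G$-isometry of $K$ onto the coset $gK$, so every coset is isometric to $(K,d_G|_K)$, a proper left-invariant metric space with $\asdim=\asdim K$. Thus each $f^{-1}(B_R(h))$ is a union of at most $c(R)$ spaces, each isometric to a single fixed space of asymptotic dimension $\asdim K$, and the uniform finite union theorem bounds its asymptotic dimension by $\asdim K$ with covering control depending only on $c(R)$, hence only on $R$. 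This is exactly the uniform fibre hypothesis, and the mapping theorem then delivers $\asdim G\le\asdim f(G)+\asdim K\le\asdim H+\asdim(\ker f)$.

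The genuine difficulty, and the step I expect to be the main obstacle, is the mapping theorem itself --- more precisely, obtaining the \emph{additive} bound $n+m$ rather than the multiplicative bound $(n+1)(m+1)-1$ that the obvious construction yields. Indeed, covering $Y$ by $n+1$ uniformly bounded $R$-disjoint families, pulling the pieces back, and colouring each fibre-preimage by $m+1$ families (legitimate by the uniform fibre condition) produces $(n+1)(m+1)$ families covering $X$; the examples $\Z^{n+m}\to\Z^{n}$ with fibre $\Z^{m}$ show that this overcount is not sharp. The additivity has a conceptual source: asymptotic dimension $\le n$ is detected by uniformly bounded covers of multiplicity $n+1$, equivalently by large-scale maps to $n$-dimensional nerve complexes, and the nerve of a cover of $X$ that fibres a base-cover of $Y$ (of nerve-dimension $n$) over fibre-covers (of nerve-dimension $m$) is itself $(n+m)$-dimensional, just as a product of an $n$-simplex and an $m$-simplex triangulates by $(n+m)$-simplices. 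Turning this into honest uniformly bounded covers of $X$ of multiplicity $n+m+1$ requires gluing the varying fibre-covers over the base-cover while simultaneously controlling the Lipschitz constant of $\phi$, the separation scale of the base families, and the uniform fibre bounds, so that every disjointness requirement holds at once; this multi-scale bookkeeping is the technical heart on which the whole argument rests.
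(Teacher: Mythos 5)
Your proposal is essentially correct, but note that the paper does not prove Theorem~\ref{Thm: Hurewicz DS} at all: it is quoted from Dranishnikov--Smith \cite{DranSmith} as motivation, so there is no internal proof to compare against. Your argument is the standard one and follows exactly the template the paper uses for its own generalization (Theorem~\ref{main}): show the map is coarsely Lipschitz, verify the uniform fibre condition, and invoke a Hurewicz-type mapping theorem --- in the paper this last step is packaged as Corollary~\ref{cor: AbstractHurewicz}, derived from the Brodskiy--Dydak--Levin--Mitra result (Theorem~\ref{AbstractHurewicz}) via Lemmas~\ref{lemma: control} and~\ref{le: asdim f vs control}. One small caution: the Bell--Dranishnikov version you cite as your ``principal tool'' is stated in the paper (Theorem~\ref{AbstractHurewicz0}) only for \emph{geodesic} domains and Lipschitz maps, which a countable group with a proper left-invariant metric is not; you should use the coarsely Lipschitz version (Theorem~\ref{AbstractHurewicz} or Corollary~\ref{cor: AbstractHurewicz}) instead, after which your fibre analysis --- each $f^{-1}(B_R(h))$ is a union of at most $|B_R(e_H)|$ isometric copies of $\ker f$, handled by the finite union theorem with uniform control --- goes through as written. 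You are right that the additivity in the mapping theorem is the technical heart, but both the paper and the literature it cites treat that as an external black box, as do you.
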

Their theorem is one in the sequence of results that began with W.~Hurewicz's theorem from 1927, known as \emph{dimension-lowering mapping theorem}, stating that for a continuous map $f:X\to Y$ between compact metric spaces it is true that $\dim X\leq \dim Y +\dim f$, where $\dim$ is referring to the (Lebesgue) covering dimension\footnote{Hurewicz's dimension theorems were originally proven for the small inductive dimension $\ind$.}, and $\dim f=\sup \{\dim (f^{-1}(y)) \mid y\in Y\}$. According to \cite{Engelking}, this was followed by
the proof of the same formula for separable metric spaces and closed maps (by W.~Hurewicz and H.~Wallman, 1941), and then for metrizable spaces and closed maps (K.~Morita, 1956, and separately K.~Nagami, 1957, \cite[Theorem 4.3.4]{Engelking}):
\begin{theorem}[Morita, Nagami] \label{MoritaNagami}
Let $f: X \to Y$ be a closed map of metrizable spaces. Then
$\dim X \leq \dim Y + \dim f$, where $\dim f \coloneqq  \sup\ \!\{ \dim(f^{-1}(y))\ | \  y\in Y\}$.
\end{theorem}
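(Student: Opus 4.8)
The plan is to argue entirely with open covers, via the standard fact that for a metrizable space $Z$ one has $\dim Z\le m$ if and only if every locally finite open cover of $Z$ admits an open refinement of multiplicity at most $m+1$ (see \cite{Engelking}). Setting $n=\dim Y$ and $k=\dim f$, it then suffices to show that every locally finite open cover $\mathcal U$ of $X$ has an open refinement of multiplicity at most $n+k+1$; the bound $\dim Y=n$ is meant to contribute the summand $n$, and the fiber bound $\dim f^{-1}(y)\le k$ the summand $k$.

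First I would carry out the fiber-to-tube step, which is where closedness of $f$ is indispensable. Fixing $y$, the fiber $f^{-1}(y)$ is a closed subspace of dimension $\le k$, so the trace of $\mathcal U$ on it has an open refinement of multiplicity $\le k+1$; extending its members to open subsets of $X$ contained in the corresponding elements of $\mathcal U$ and shrinking, one obtains an open family refining $\mathcal U$ that covers an open set $O\supseteq f^{-1}(y)$ and has multiplicity at most $k+1$ on $O$. Then $W_y:=Y\setminus f(X\setminus O)$ is an open neighborhood of $y$ (closedness of $f$ is used exactly here) with $f^{-1}(W_y)\subseteq O$, so over the tube $f^{-1}(W_y)$ the family restricts to a refinement of $\mathcal U$ of multiplicity $\le k+1$. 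Thus $Y$ is covered by open sets $W$ over whose tubes $\mathcal U$ admits a refinement of multiplicity $\le k+1$.

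Next, using $\dim Y=n$ together with Ostrand's colouring theorem, I would refine $\{W_y\}$ to an open cover of $Y$ that is a union of $n+1$ pairwise disjoint families $\mathcal W_0,\dots,\mathcal W_n$, retaining over each tube a fiber-refinement of multiplicity $\le k+1$. At this point it is tempting to intersect the pulled-back base cover with the tube-refinements, but every such \emph{parallel} combination is multiplicative rather than additive: a point lying over $\le n+1$ base sets and in $\le k+1$ fiber sets lands in up to $(n+1)(k+1)$ of the intersections, yielding only the useless estimate $\dim X\le(n+1)(k+1)-1$. The same overcounting defeats the decomposition-theorem approach of writing $X$ as a union of $(n+1)(k+1)$ zero-dimensional sets.

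The real content, and the step I expect to be the main obstacle, is to replace this product by a sum. The target $n+k+1$ is exactly the fiber-bundle value $\dim B+\dim F$ for a product $B\times F$, and as for that product inequality it cannot be reached by parallel colouring; one must interlock the base and fiber combinatorics. Concretely, I would assign to each family $\mathcal W_a$ a level $a\in\{0,\dots,n\}$ and to each fiber-refinement a level $b\in\{0,\dots,k\}$, and build the final cover from the \emph{diagonal} families indexed by the total level $a+b\in\{0,\dots,n+k\}$, arranging by a careful shrinking that within each total level the chosen sets are pairwise disjoint; such a \emph{brick-laying} scheme has multiplicity $\le n+k+1$. The difficulty is that making the level-$(a+b)$ family disjoint forces the fiber-refinements in \emph{overlapping} tubes of different base colours to be mutually offset, which is automatic only when the fiber is constant, that is, in the product or bundle case. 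For a general closed map the fibers vary, so the hard part is to construct the tube-refinements \emph{compatibly} across overlaps, using closedness of $f$ to propagate a fiber-refinement from a single fiber to a whole tube and to align it with the neighbouring tubes. Equivalently, the additive combination may be organised through the partition (separator) characterisation of covering dimension, distributing $n+k+1$ separation conditions as $n+1$ pulled back from $Y$ and $k+1$ realised in the fibers; either way, the compatible alignment over varying fibers is the crux. All auxiliary facts invoked here — the multiplicity and partition characterisations of $\dim$, Ostrand's theorem, the decomposition and countable closed sum theorems, and the closed-map tube lemma — are standard in dimension theory (see \cite{Engelking}).
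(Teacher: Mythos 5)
The paper itself gives no proof of Theorem \ref{MoritaNagami}: it is quoted as a classical result of Morita and Nagami with a pointer to \cite[Theorem 4.3.4]{Engelking}, so there is no in-paper argument to measure yours against; I can only assess the proposal on its own terms.

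On those terms it has a genuine gap, and you flag it yourself. The opening steps are sound: the reduction to producing, for each locally finite open cover, a refinement of multiplicity at most $n+k+1$, and the fibre-to-tube lemma (refine $\mathcal{U}$ over $f^{-1}(y)$ with multiplicity $\le k+1$ on an open $O\supseteq f^{-1}(y)$, then use closedness of $f$ to pass to the saturated tube $f^{-1}(W_y)$ with $W_y=Y\setminus f(X\setminus O)$) are both correct and standard. But the entire content of the theorem is the passage from the multiplicative bound $(n+1)(k+1)$ to the additive bound $n+k+1$, and at exactly that point the text switches from proof to commentary: you observe that the parallel combination overcounts, propose a ``diagonal'' indexing by total level $a+b$, and then concede that making each total level disjoint requires aligning the fibre-refinements across overlapping tubes --- which you yourself call ``the crux'' --- without supplying any construction that achieves it. That alignment is not a technicality: for a general closed map the refinements produced over different fibres bear no relation to one another, so the level-$(a+b)$ families you describe need not be disjointifiable, and nothing in the proposal forces them to be. The classical treatments do not attempt this head-on colouring; Engelking's proof of \cite[Theorem 4.3.4]{Engelking}, for instance, converts the additivity into an induction on the fibre dimension by means of a partition lemma for closed maps (producing, for a pair of disjoint closed sets in $X$, a partition $L$ to which $f$ restricts as a closed map with fibres of strictly smaller dimension), so that the summand $k$ is consumed one unit at a time rather than in a single combinatorial step. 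As written, your outline correctly locates the difficulty but does not overcome it, so it is not yet a proof.
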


In the 2000's, in addition to Theorem \ref{Thm: Hurewicz DS} stated above, statements similar to Theorem \ref{MoritaNagami} were proven for asymptotic dimension, first by G.~Bell and A.~Dranishnikov (\cite[Theorem 1]{BellDran-Hurewicz} or \cite[Theorem 29]{BellDran1}), and then generalized by N.~Brodskiy, J.~Dydak, M.~Levin and A.~Mitra (\cite[Theorem 1.2]{BDLM}):

\begin{theorem}[Bell and Dranishnikov]
\label{AbstractHurewicz0} Let $f: X \to Y$ be a  Lipschitz map from a geodesic metric space $X$ to a metric space $Y$.  If for every $r>0$ the collection
$\mathbb X_r \coloneqq  \{f^{-1}(B(y, r))\}_{y \in Y}$ satisfies 
$\asdim\ \!(\mathbb X_r)  \leq n$ uniformly,
then $\asdim X \leq \asdim Y + n.$
\end{theorem}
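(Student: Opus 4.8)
The strategy I would follow is the standard one for Hurewicz-type results: translate everything into the language of uniformly bounded covers, pull a good cover of $Y$ back through $f$, and then refine each (unbounded) preimage piece using the uniform control on the fibres. Concretely, I would use the characterization that $\asdim Z\le m$ if and only if for every $R>0$ there exist $m+1$ families $\mathcal U^0,\dots,\mathcal U^m$ of uniformly bounded subsets of $Z$, each $\mathcal U^i$ being $R$-disjoint (any two distinct members at distance $>R$), whose union covers $Z$; equivalently, a uniformly bounded cover with Lebesgue number $\ge R$ and multiplicity $\le m+1$. Writing $m=\asdim Y$ and fixing $R>0$, the goal is to produce $m+n+1$ such families in $X$.

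First I would fix the Lipschitz constant $\lambda$ of $f$ and, since $\asdim Y\le m$, choose at a large scale $T$ (to be calibrated against $R$ and $\lambda$) families $\mathcal V^0,\dots,\mathcal V^m$ of $T$-disjoint subsets of $Y$, uniformly bounded by some $D_Y=D_Y(T)$, whose union covers $Y$. Pulling back, for each $V\in\mathcal V^i$ the set $f^{-1}(V)$ is contained in some $f^{-1}(B(y_V,D_Y))$, i.e. in a member of the family $\mathbb X_{D_Y}$. The Lipschitz condition supplies the separation I need in $X$: if $x\in f^{-1}(V)$ and $x'\in f^{-1}(V')$ with $V\ne V'$ in the same $\mathcal V^i$, then $\dist(f(x),f(x'))>T$, hence $\dist(x,x')>T/\lambda$, so choosing $T\ge\lambda R$ makes each family $\{f^{-1}(V):V\in\mathcal V^i\}$ $R$-disjoint in $X$. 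This is the point at which the geodesic hypothesis on $X$ is meant to enter: it guarantees the large-scale connectivity of $X$ that makes this transfer of covering data (Lebesgue number, and the coarse coherence of the preimage pieces) faithful, so that the pulled-back families genuinely behave as an $m$-dimensional ``base'' decomposition rather than fragmenting.

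Next I would invoke the hypothesis $\asdim(\mathbb X_{D_Y})\le n$ uniformly. For the given $R$ this yields a single constant $D=D(R)$ such that every member $M$ of $\mathbb X_{D_Y}$ admits $n+1$ families $\mathcal W^0_M,\dots,\mathcal W^n_M$, each $R$-disjoint and uniformly bounded by $D$, covering $M$. Intersecting these with the preimages $f^{-1}(V)$ and, crucially, reusing the same fibre-colour $j$ across all $V$ lying in a single base-colour $\mathcal V^i$ (legitimate because those preimages are already $R$-disjoint in $X$), produces for each pair $(i,j)$ an $R$-disjoint, $D$-bounded family, and together these cover $X$.

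The main obstacle is exactly here. The naive pairing $(i,j)$ gives $(m+1)(n+1)$ families, i.e. the multiplicative estimate $\asdim X\le mn+m+n$, whereas the theorem asserts the sharp additive bound $m+n$. Collapsing the multiplicative count to the additive one is the real content of the theorem, and it \emph{cannot} come from a finite union theorem, which only returns the maximum of the dimensions. I would instead argue by induction on $m=\asdim Y$: the base case $m=0$ is precisely the construction above with a single base-colour, yielding $\asdim X\le n$; for the inductive step I would peel off one base-colour and combine the fibre covers over successive base-colours in a \emph{staggered} fashion, so that their contributions to the multiplicity interleave rather than stack. This is the ``brick-laying'' mechanism by which a base of multiplicity $m+1$ and fibres of multiplicity $n+1$ combine to multiplicity $m+n+1$. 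Making the staggering precise, keeping the separation scale $R$ and the uniform diameter bound $D$ under control through the induction, and verifying that the geodesic hypothesis indeed licenses the gluing at each stage, is the technical heart of the argument and the step I expect to be the most delicate.
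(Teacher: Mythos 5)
First, a point of reference: the paper does not prove this statement at all. Theorem \ref{AbstractHurewicz0} is imported verbatim from Bell and Dranishnikov ([BD06, Theorem 1], [BD08, Theorem 29]) and used as a black box (in fact the paper immediately replaces it by the stronger BDLM version, Theorem \ref{AbstractHurewicz}). So there is no in-paper proof to measure you against; your proposal has to stand on its own, and as written it does not close.

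The parts you actually carry out are fine: pulling back an $L$-disjoint, $D$-bounded colouring of $Y$, using the Lipschitz constant to get $L/\lambda$-disjointness of the preimages within each base colour, refining each preimage by $n+1$ further families via the uniform hypothesis on $\mathbb{X}_{D}$, and the base case $\asdim Y=0$. But, as you say yourself, this yields only the multiplicative bound $\asdim X\le (m+1)(n+1)-1$, and the entire content of the theorem is the passage from $(m+1)(n+1)$ colours to $m+n+1$. That passage is exactly the step you leave as a gesture: ``peel off one base-colour and combine \ldots{} in a staggered fashion'', ``brick-laying''. No inductive statement is formulated, no rule is given for how the fibre colours over base colour $i$ are offset against those over base colour $i'$, and there is no accounting of how the separation scale and the diameter bound survive the recombination. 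In Bell and Dranishnikov's actual argument this is not a colour-bookkeeping trick: it runs through their union theorems for families of uniform asymptotic dimension (in particular the infinite union theorem with an exceptional subset of controlled dimension) together with a saturation/neighborhood construction on the pulled-back cover, and that machinery is entirely absent from your outline. Relatedly, your appeal to the geodesic hypothesis (``large-scale connectivity \ldots{} makes the transfer faithful'') does no identifiable work anywhere in the argument you wrote, which is a symptom of the fact that the step where it is genuinely needed has not been engaged. As it stands, the proposal proves a weaker statement and defers the theorem itself to an unspecified mechanism.
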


\begin{theorem}[Brodskiy, Dydak, Levin and Mitra]\label{AbstractHurewicz} 
Let $f: X \to Y$ be a coarsely Lipschitz map between metric spaces.  Then $\asdim X \leq \asdim Y + \asdim f,$
where $\asdim f\coloneqq \sup \{ \asdim A \mid A \subseteq X \text{ and } \asdim f(A) = 0\}.$
\end{theorem}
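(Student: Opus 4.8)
The plan is to work with the standard colored-cover characterisation of asymptotic dimension: $\asdim Z\le k$ if and only if for every $R>0$ there is a uniformly bounded cover of $Z$ that splits into $k+1$ subfamilies, each of which is $R$-\emph{disjoint} (any two distinct members lie at distance $>R$). Writing $m=\asdim Y$ and $n=\asdim f$, the goal becomes, for an arbitrary $R>0$, to produce a uniformly bounded cover of $X$ that decomposes into $m+n+1$ such $R$-disjoint color classes. The first ingredient is scale control coming from the hypothesis that $f$ is coarsely Lipschitz: fix constants $\lambda,c$ with $\dist_Y(f(x),f(x'))\le \lambda\,\dist_X(x,x')+c$ and set $S=\lambda R+c$. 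Then preimages of $S$-disjoint sets in $Y$ are automatically $R$-disjoint in $X$, since $\dist_Y(f(x),f(x'))>S$ forces $\dist_X(x,x')>R$.

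First I would treat the base. Using $\asdim Y=m$ at scale $S$, choose a uniformly bounded cover $\mathcal V=\mathcal V_0\cup\dots\cup\mathcal V_m$ of $Y$ with each $\mathcal V_i$ an $S$-disjoint family of diameter $\le D$. Pulling back, the families $\mathcal W_i=\{f^{-1}(V):V\in\mathcal V_i\}$ cover $X$, and by the scale-control observation each $\mathcal W_i$ is $R$-disjoint in $X$; however its members $f^{-1}(V)$ are unbounded, so these families must still be refined in the fiber direction. For that I would invoke the fiber hypothesis in its uniform form: since every $f^{-1}(V)$ maps into the bounded set $V$ one has $\asdim f(f^{-1}(V))=0$ and hence $\asdim f^{-1}(V)\le n$, and, because the $V\in\mathcal V_i$ are uniformly bounded, the whole family $\{f^{-1}(V)\}_{V\in\mathcal V_i}$ has $\asdim\le n$ \emph{uniformly}. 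Establishing this uniform reformulation of $\asdim f\le n$ (preimages of a uniformly bounded family are uniformly $n$-dimensional) is a necessary preliminary. Granting it, the infinite union theorem for an $R$-disjoint, uniformly $n$-dimensional family lets me cover each $\bigcup\mathcal W_i$ by a uniformly bounded family splitting into $n+1$ $R$-disjoint color classes.

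The main obstacle is the final combination, and it is genuinely delicate: assembling the $m+1$ base columns, each carrying its own $n+1$ fiber colors, naively yields $(m+1)(n+1)$ color classes, hence only the union-theorem bound $\asdim X\le (m+1)(n+1)-1$ rather than the Hurewicz bound $m+n$. One cannot simply reuse fiber colors across base columns, because distinct pieces $f^{-1}(V)$ and $f^{-1}(V')$ with $V\in\mathcal V_i$, $V'\in\mathcal V_{i'}$ and $i\ne i'$ may overlap, so identically colored fiber pieces from the two columns need not be $R$-disjoint. Overcoming this requires building the fiber refinements \emph{coherently}: on an overlap region $f^{-1}(V\cap V')$ the two fiber covers must agree, so that multiplicities add rather than multiply. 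Conceptually this is the coarse analogue of the statement that the total space of a bundle with $m$-dimensional base and $n$-dimensional fiber has dimension $m+n$, and concretely it amounts to constructing a uniformly cobounded, small-Lipschitz map from $X$ into the product of an $m$-dimensional base nerve with an $n$-dimensional fiber nerve. I expect this coherent gluing, organised through a telescoping sequence of scales $R<R_1<R_2<\dots$ chosen so that the fiber cover of each color is built at a scale large enough to be compatible with the base separations, to be the technical heart of the argument. Once the resulting combined cover is verified to be uniformly bounded and $(m+n+1)$-colorable, the colored-cover characterisation gives $\asdim X\le m+n=\asdim Y+\asdim f$.
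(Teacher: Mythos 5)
The first thing to say is that the paper does not prove this statement at all: Theorem \ref{AbstractHurewicz} is imported verbatim from [BDLM, Theorem 1.2] and used as a black box, so there is no in-house proof to compare yours against. Judged on its own terms, your text is a plan rather than a proof. The framework is set up correctly -- colored covers, and the scale transfer making preimages of $S$-separated families $R$-disjoint (though note that the paper's Definition \ref{def: coarsely - maps} only supplies a control function $\Phi_+$, not an affine bound $\lambda t+c$; you should take $S=\Phi_+(R)$, which changes nothing else). You also correctly diagnose that the naive combination of $m+1$ base colors with $n+1$ fiber colors gives only $\asdim X\leq (m+1)(n+1)-1$. But the two steps you label a ``necessary preliminary'' and ``the technical heart'' are precisely where the theorem lives, and neither is carried out.

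Concretely: (1) The uniformization step is not formal. From $\asdim f\leq n$ you get, for each single bounded $V$, that $\asdim f^{-1}(V)\leq n$, but a supremum of individual dimensions does not hand you a control function valid \emph{simultaneously} for all members of $\{f^{-1}(V)\}_{V\in\mathcal V_i}$. The tempting shortcut -- apply the definition of $\asdim f$ to the union $A_i=\bigcup_{V\in\mathcal V_i}f^{-1}(V)$ -- fails because $f(A_i)\subseteq\bigcup\mathcal V_i$ is an $S$-separated uniformly bounded family, and such a set need not have asymptotic dimension $0$ (a separated net in $\Z$ already has $\asdim=1$). The uniform statement is exactly [BDLM, Cor.\ 4.10], i.e.\ Lemma \ref{le: asdim f vs control} of this paper, and its proof is a genuine piece of work. (2) The coherent gluing that cuts $(m+1)(n+1)$ colors down to $m+n+1$ is described only by analogy with fiber bundles; there is no construction of the covers, no specification of how the telescoping scales $R<R_1<\cdots$ interact with the base separation, and no verification that two same-colored fiber pieces sitting over \emph{differently} colored, overlapping base sets are $R$-disjoint -- the exact difficulty you yourself identify. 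Since this is where the Hurewicz bound $m+n$ comes from, the proposal does not establish the theorem; a complete argument has to follow something like BDLM's induction on $\asdim Y$ via dimension control functions, or the Bell--Dranishnikov partition-of-unity approach.
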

 
We will define all notions mentioned in Theorem \ref{AbstractHurewicz0} and Theorem \ref{AbstractHurewicz} in Section 2. Note that in the original
statement of Theorem \ref{AbstractHurewicz} in \cite{BDLM}, the authors were referring to \emph{large scale uniform maps}, but these maps are called \emph{coarsely Lipschitz} in this paper, following the convention of \cite{CHT}.

By T.~Hartnick and the author of this paper, Theorem \ref{AbstractHurewicz}  was used to prove that an analogous formula to the one in Theorem \ref{Thm: Hurewicz DS} works for asymptotic dimension and global morphisms of countable approximate groups (\cite[Theorem 1.4]{Hartnick-Tonic}): 
\begin{theorem}
\label{Thm: H-T} Let $(\Xi, \Xi^\infty)$ and $(\Lambda, \Lambda^\infty)$ be countable approximate groups and let $f: (\Xi, \Xi^\infty) \to (\Lambda, \Lambda^\infty)$ be a global morphism. Then
 $$
 \asdim\Xi \leq \asdim\Lambda + \asdim\ \!([\![\ker(f)]\!]_c),
 $$
where $[\![\ker(f)]\!]_c$ is the \emph{coarse kernel} of $f$, defined by 
$
[\![\ker (f)]\!]_c\coloneq [\Xi^2\cap\ker f]_c 
$, and $[\cdot]_c$ stands for the coarse class of the (sub)space within brackets.
\end{theorem}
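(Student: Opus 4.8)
The plan is to deduce the statement from the Hurewicz-type formula of Brodskiy, Dydak, Levin and Mitra (Theorem~\ref{AbstractHurewicz}) by realizing the global morphism $f$ as a coarsely Lipschitz map of metric spaces and then identifying the quantity $\asdim f$ appearing there with the asymptotic dimension of the coarse kernel. First I would fix, on each of $\Xi$ and $\Lambda$, a left-invariant and coarsely proper metric inducing its canonical coarse structure --- for instance the restriction to $\Xi$ (resp.\ $\Lambda$) of a word metric on the enveloping group $\Xi^\infty$ (resp.\ $\Lambda^\infty$) built from the approximate group as a generating set. Any two such metrics are coarsely equivalent, so $\asdim\Xi$ and $\asdim\Lambda$ are well defined, and the same holds for the coarse kernel, whose asymptotic dimension depends only on its coarse class $[\Xi^2\cap\ker f]_c$. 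Next I would check that a global morphism induces a coarsely Lipschitz map $\Xi\to\Lambda$: since a global morphism is governed by an honest homomorphism of the enveloping groups sending $\Xi$ into a bounded power $\Lambda^k$, it distorts word length at most linearly, which is exactly coarse Lipschitzness. Theorem~\ref{AbstractHurewicz} then gives at once
$$\asdim\Xi \leq \asdim\Lambda + \asdim f, \qquad \asdim f = \sup\{\asdim A \mid A\subseteq\Xi,\ \asdim f(A)=0\},$$
so the theorem reduces to the equality $\asdim f = \asdim\bigl([\![\ker(f)]\!]_c\bigr)$.

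One inequality is immediate: the set $K \coloneqq \Xi^2\cap\ker f$ is sent by $f$ into a bounded subset of $\Lambda$, so $\asdim f(K)=0$ and hence $\asdim\bigl([\![\ker(f)]\!]_c\bigr) = \asdim K \leq \asdim f$. For the reverse inequality I would show that every $A\subseteq\Xi$ with $\asdim f(A)=0$ satisfies $\asdim A \leq \asdim K$. The guiding principle, as in the group-homomorphism case, is that $f^{-1}(\text{bounded set})$ is coarsely controlled by $K$: using a coarse (linearly distorted) section of the structural homomorphism, one shows that the preimage of any bounded subset of $\Lambda$, intersected with $\Xi^2$, lies in a bounded neighborhood of $K$, hence has asymptotic dimension at most $\asdim K$, with the bound uniform in the size of the bounded set. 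Given $A$ with $\asdim f(A)=0$, for each scale $R$ I would choose a uniformly bounded, $R$-disjoint, $0$-dimensional cover of $f(A)$ and pull it back to a partition of $A$; coarse Lipschitzness of $f$ makes the resulting pieces uniformly disjoint in $\Xi$, and for $a,b$ in one piece one has $a^{-1}b\in\Xi^2$ with $f(a^{-1}b)$ bounded, so each piece is coarsely contained in a bounded neighborhood of a translate of $K$ and satisfies $\asdim\leq\asdim K$ uniformly.

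The infinite union theorem for asymptotic dimension, applied to the $R$-disjoint pieces across all $R$, then yields $\asdim A \leq \asdim K$, completing the identification and hence the proof.

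I expect the main obstacle to be the construction of the coarse section and the resulting control of preimages. Because $\Xi$ is not closed under multiplication and the structural homomorphism only sees the enveloping group, one must produce a lift of bounded-length elements of $\Lambda$ to bounded-length elements of $\Xi^\infty$ landing in $\Xi^2$, and then argue that preimages of bounded sets are pushed into the \emph{coarse} kernel $\Xi^2\cap\ker f$ rather than into the (potentially much larger) kernel of the enveloping homomorphism. Reconciling this algebraic description of the coarse kernel with the word-metric geometry, and doing so uniformly in the scale $R$ so that the union theorem applies, is where the real work lies.
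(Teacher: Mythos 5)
Your overall route is the one the paper itself points to: Theorem \ref{Thm: H-T} is quoted from \cite{Hartnick-Tonic}, where it is deduced from Theorem \ref{AbstractHurewicz} exactly as you propose (show $f|_\Xi$ is coarsely Lipschitz, then bound $\asdim f$ by the asymptotic dimension of the coarse kernel by trapping preimages of bounded sets in translates of $\Xi^2\cap\ker f$); the paper's Section 5 implements the same scheme, via Corollary \ref{cor: AbstractHurewicz}, for the generalization Theorem \ref{main}. Two points need repair, however. First, your suggested metric is wrong: the word metric on $\Xi^\infty$ with generating set $\Xi$ gives $\Xi$ diameter $1$, so it does \emph{not} induce the canonical coarse structure and would make $\asdim\Xi=0$. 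The canonical metrics here are restrictions of left-invariant \emph{proper} metrics on the enveloping group (Definition \ref{def: canonical coarse class}), and properness (finiteness of balls) is used essentially below. Second, and more substantively, you leave the key containment --- that preimages of bounded sets land coarsely in $\Xi^2\cap\ker f$ rather than in $\ker f\cap\Xi^\infty$ --- as an anticipated obstacle rather than proving it; as stated, the ``coarse section'' device would naturally deposit you in $\Xi^4\cap\ker f$, forcing an extra appeal to the coarse equivalence of the sets $\Xi^k\cap\ker f$ for $k\geq 2$.

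The missing step has a short direct proof that avoids sections entirely. Let $A_i\subseteq A\subseteq\Xi$ be a piece with $\diam f(A_i)\leq D$ and fix $a\in A_i$. For every $b\in A_i$ we have $a^{-1}b\in\Xi^2$ and $f(a^{-1}b)=f(a)^{-1}f(b)$ lies in the finite set $F:=\overline{B}_{d'}(e_\Lambda,D)\cap\Lambda^2$ (finite by properness of $d'$). For each $\mu\in F$ that is realized, choose $b_\mu\in A_i$ with $f(a)^{-1}f(b_\mu)=\mu$; then any other $b'\in A_i$ with $f(a)^{-1}f(b')=\mu$ satisfies $b_\mu^{-1}b'\in\Xi^2$ and $f(b_\mu^{-1}b')=e_\Lambda$, so $A_i\subseteq\bigcup_{\mu\in F}b_\mu\cdot(\Xi^2\cap\ker f)$, a union of at most $|F|$ isometric translates of $K=\Xi^2\cap\ker f$ with $|F|$ controlled by $D$ alone; the finite union theorem then gives $\asdim A_i\leq\asdim K$ uniformly, which is what your final union argument needs. (Also note that only the inequality $\asdim f\leq\asdim K$ is required; the reverse inequality you assert does not even typecheck, since $K\subseteq\Xi^2$ need not be a subset of the domain $\Xi$ over which the supremum defining $\asdim f$ is taken --- but this is harmless.) With these corrections your argument goes through and coincides with the proof in \cite{Hartnick-Tonic}.
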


Theorem \ref{AbstractHurewicz}   was also used to prove the similar formula for symmetric unital quasimorphisms of countable approximate groups (\cite[Theorem 4.6]{Tonic}):
\begin{theorem}\label{Thm-symm-quasimorphism}
Let $(\Xi, \Xi^\infty)$ and $(\Lambda,\Lambda^\infty)$ be countable approximate groups, and let $f:(\Xi, \Xi^\infty) \to (\Lambda,\Lambda^\infty)$ be a symmetric unital quasimorphism. Then 
\[ \asdim \Xi \leq \asdim \Lambda + \asdim\ \! (f^{-1}(D(f))),\]
where $D(f)$ is the defect set of $f$.
\end{theorem}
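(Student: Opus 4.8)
The plan is to reduce the statement to the Hurewicz-type theorem of Brodskiy, Dydak, Levin and Mitra (Theorem \ref{AbstractHurewicz}) and then to control the ``coarse fibers'' of $f$ by the single set $f^{-1}(D(f))$. First I would fix the canonical coarse structures on the countable approximate groups $\Xi$ and $\Lambda$, realized by proper left-invariant metrics $d_\Xi$, $d_\Lambda$ on $\Xi^\infty$, $\Lambda^\infty$ and restricted to $\Xi$, $\Lambda$; the two features I use repeatedly are that a subset is bounded if and only if it is finite, and that left translation by any element of $\Xi^\infty$ (resp.\ $\Lambda^\infty$) is an isometry. The first genuine step is to check that $f\colon \Xi \to \Lambda$ is coarsely Lipschitz: if $d_\Xi(x,x') \le R$ then $x^{-1}x'$ lies in the finite ball $B(e_\Xi,R)$, and the quasimorphism inequality $f(x') \in f(x)\, f(x^{-1}x')\, D(f)$ together with the finiteness of $D(f)$ confines $f(x)^{-1}f(x')$ to a finite set depending only on $R$; hence $d_\Lambda(f(x),f(x'))$ is bounded in terms of $R$.

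Once $f$ is coarsely Lipschitz, Theorem \ref{AbstractHurewicz} gives $\asdim \Xi \le \asdim\Lambda + \asdim f$, so everything reduces to the single inequality $\asdim f \le \asdim (f^{-1}(D(f)))$; write $n \coloneqq \asdim (f^{-1}(D(f)))$. The heart of the argument is a fiber-containment lemma, and this is where the symmetric and unital hypotheses are essential. For $m \in \Lambda^\infty$ with $f^{-1}(m) \neq \emptyset$, fix $x_0 \in f^{-1}(m)$; for any other $x \in f^{-1}(m)$ the quasimorphism inequality together with symmetry ($f(x_0^{-1}) = f(x_0)^{-1}$) yields $f(x_0^{-1}x) \in f(x_0)^{-1} f(x)\, D(f) = m^{-1}m\, D(f) = D(f)$, so $x_0^{-1}x \in f^{-1}(D(f))$. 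Thus every nonempty fiber is contained in a single left translate $x_0\, f^{-1}(D(f))$, and since translations are isometries we get $\asdim (f^{-1}(m)) \le n$. By the finite union theorem the same bound holds for $f^{-1}(M)$ whenever $M \subseteq \Lambda^\infty$ is finite.

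To reach $\asdim f \le n$ I would verify the uniform fiber condition for $f$ scale by scale. Fix $R>0$ and consider the family $\mathbb{X}_R = \{f^{-1}(B(y,R))\}_{y\in\Lambda}$. Writing a ball as $B(y,R) = y\,K_R$ with $K_R = B(e_\Lambda,R)$ finite, the computation above (now comparing two points lying in a common ball rather than a common fiber) shows that each nonempty member $f^{-1}(yK_R)$ is contained in a single translate of the fixed finite-image set $f^{-1}(N_R)$, where $N_R \coloneqq K_R^{-1}K_R\,D(f)$ is finite and independent of $y$. Consequently every member of $\mathbb{X}_R$ is an isometric copy of a subset of $f^{-1}(N_R)$, so $\asdim \mathbb{X}_R \le \asdim (f^{-1}(N_R)) \le n$ uniformly in $y$, the last inequality coming from the previous paragraph. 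Since this uniform fiber condition (the hypothesis appearing in Theorem \ref{AbstractHurewicz0}) holds for every $R$, it bounds the fiberwise asymptotic dimension of $f$, giving $\asdim f \le n$; combined with Theorem \ref{AbstractHurewicz} this finishes the proof.

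I expect the main obstacle to be precisely the passage from the pointwise fiber-containment lemma to a bound that is \emph{uniform} over the whole family $\mathbb{X}_R$: the naive approach, decomposing an $A$ with $\asdim f(A)=0$ into clusters and invoking an infinite union theorem, runs into the fact that the mesh of the covers of the clusters may grow with the separation scale. The device that circumvents this is the observation that, for each fixed $R$, \emph{all} members of $\mathbb{X}_R$ are left translates of subsets of one and the same set $f^{-1}(N_R)$; left-invariance of the metric then makes the uniformity automatic, and the residual dependence on $R$ is harmless because the uniform fiber condition is only checked for each $R$ separately. A secondary point to handle with care is the bookkeeping of the defect set under the symmetric--unital conventions, namely verifying that $e_\Lambda \in D(f)$ (so that $e_\Xi \in f^{-1}(D(f))$, making the target nonempty) and that the inequality $f(gh)\in f(g)f(h)D(f)$ is available in the exact form used throughout.
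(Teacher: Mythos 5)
Your proposal is correct and follows essentially the same route as the paper's argument (which proves the more general Theorem \ref{main} and records the symmetric--unital simplification in a closing remark): show $f|_\Xi$ is coarsely Lipschitz, invoke the Brodskiy--Dydak--Levin--Mitra theorem via the uniform fiber condition, and obtain uniformity by exhibiting each member of $\{f^{-1}(B(y,R))\}$ as a left translate of a subset of one fixed set $f^{-1}(N_R)$ with $N_R$ finite. The only minor divergence is the final step: you pass from the pointwise containment $f^{-1}(m)\subseteq x_m\, f^{-1}(D(f))$ to $\asdim f^{-1}(N_R)\leq \asdim f^{-1}(D(f))$ by the finite union theorem, whereas the paper instead picks one representative in each $f^{-1}(m)$ and shows $f^{-1}(N_R)$ lies in a bounded neighborhood of $f^{-1}(D(f))$, then applies Corollary \ref{cor: asdim prop}; both steps are valid.
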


The goal of this paper is to extend Theorem \ref{Thm-symm-quasimorphism} by proving the following

\begin{theorem}\label{main}
Let $(\Xi, \Xi^\infty)$ and $(\Lambda,\Lambda^\infty)$ be countable approximate groups, and let $f:(\Xi, \Xi^\infty) \to (\Lambda,\Lambda^\infty)$ be a (general) quasimorphism. Then
$$\asdim \Xi \leq \asdim \Lambda + \asdim \left(f^{-1}( f(e_\Xi)D(f)^{-1}D(f))\right), $$
where $D(f)$ is the defect set of $f$. 
\end{theorem}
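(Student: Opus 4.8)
My plan is to follow the blueprint used for the symmetric unital case (Theorem~\ref{Thm-symm-quasimorphism}) and reduce the statement to an application of the Brodskiy--Dydak--Levin--Mitra Hurewicz theorem (Theorem~\ref{AbstractHurewicz}). First I would equip $\Xi$ and $\Lambda$ with their canonical (left-invariant) coarse structures coming from the approximate-group data, as in \cite{Hartnick-Tonic}; recall that in this setting left translations are coarse isometries and that $\Xi$, each power $\Xi^{n}$, and their translates are all coarsely equivalent, so $\asdim$ is insensitive to replacing a subset by a bounded modification or by a translate. I would then verify that $f$ is coarsely Lipschitz: if $x^{-1}x'\in\Xi^{n}$ then, iterating the defect inequality, $f(x)^{-1}f(x')$ lies in a set controlled by $n$ and the bounded defect set $D(f)$, so bounded sets are sent to bounded sets with uniform control. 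With these two facts in place, Theorem~\ref{AbstractHurewicz} gives $\asdim\Xi\le\asdim\Lambda+\asdim f$, and the whole problem is reduced to proving the inequality $\asdim f\le\asdim\bigl(f^{-1}(f(e_\Xi)D(f)^{-1}D(f))\bigr)$.

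Write $K\coloneqq f^{-1}(f(e_\Xi)D(f)^{-1}D(f))$ and $k\coloneqq\asdim K$. To bound $\asdim f$ I must show that every $A\subseteq\Xi$ with $\asdim f(A)=0$ satisfies $\asdim A\le k$. Fix a scale $R$; since $f$ is coarsely Lipschitz there is $R'$ with $d(x,x')\le R\Rightarrow d(f(x),f(x'))\le R'$. Using $\asdim f(A)=0$, cover $f(A)$ by a uniformly bounded, $R'$-disjoint family $\{V_\alpha\}$ (say each $V_\alpha$ has diameter $\le S=S(R')$), and set $W_\alpha\coloneqq A\cap f^{-1}(V_\alpha)$. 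The $R'$-disjointness of the $V_\alpha$ together with coarse Lipschitzness forces the $W_\alpha$ to be $R$-disjoint, so it suffices to show that the $W_\alpha$ have asymptotic dimension $\le k$ uniformly and then to invoke the infinite union theorem for asymptotic dimension of uniformly bounded, $R$-disjoint families.

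The heart of the argument is therefore the claim that each fiber-block $W_\alpha$ is, up to a uniformly bounded correction and a coarse isometry, a subset of $K$. Choosing a basepoint $x_0\in W_\alpha$ and using left-invariance, I would left-translate $W_\alpha$ by $x_0^{-1}$ and analyze $x_0^{-1}x$ for $x\in W_\alpha$. Splitting $f(x_0^{-1}x)$ through the defect inequality produces the factor $f(x_0^{-1})$, and here the non-unital, non-symmetric nature of $f$ enters: from $f(x_0)f(x_0^{-1})\in f(e_\Xi)D(f)$ one only gets $f(x_0^{-1})\in f(x_0)^{-1}f(e_\Xi)D(f)$, rather than $f(x_0)^{-1}$. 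Carrying this through, and using that $f(x_0),f(x)$ both lie in the bounded set $V_\alpha$, I obtain that $f(x_0^{-1}x)$ lands in a bounded set of the shape $f(x_0)^{-1}f(e_\Xi)D(f)\,V_\alpha^{-1}V_\alpha\,D(f)^{-1}$, i.e.\ in a uniformly bounded (size depending only on $S$ and $D(f)$) neighborhood of a conjugate of $f(e_\Xi)D(f)^{-1}D(f)$. The appearance of exactly the set $f(e_\Xi)D(f)^{-1}D(f)$ is the residue of this computation in the limiting case $V_\alpha=\{\lambda\}$, and it specializes correctly to $f^{-1}(D(f))$ in the symmetric unital setting, where $f(x_0^{-1})=f(x_0)^{-1}$ makes the correction disappear.

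The main obstacle is precisely this last point: because $f$ is neither symmetric nor unital, the translate $x_0^{-1}W_\alpha$ is governed not by $K$ itself but by a conjugate of the fiber set by $f(x_0)$, and $f(x_0)$ ranges over the bounded (but scale-dependent) set $V_\alpha$. The work is to show that conjugation by the uniformly bounded elements of $V_\alpha$, together with the $f(e_\Xi)$- and $D(f)$-corrections, changes the relevant preimage only within a uniformly bounded neighborhood, so that $\asdim(x_0^{-1}W_\alpha)\le\asdim K=k$ with a bound independent of $\alpha$ and of $R$. Granting this uniform control---via coarse invariance of $\asdim$ under bounded perturbations and coarse isometries, and the coarse equivalence of $\Xi$ with its powers and translates from \cite{Hartnick-Tonic}---the infinite union theorem yields $\asdim A\le k$, hence $\asdim f\le k$, and the theorem follows by combining with Theorem~\ref{AbstractHurewicz}.
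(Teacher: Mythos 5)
Your overall architecture (show $f$ is coarsely Lipschitz, invoke Theorem~\ref{AbstractHurewicz}, and reduce to bounding translated fiber-blocks by $K:=f^{-1}\bigl(f(e_\Xi)D(f)^{-1}D(f)\bigr)$) is the same as the paper's, but the two steps you defer as ``the work'' are exactly where the content lies, and as described they do not go through. The conjugation problem you flag is real, and your proposed resolution rests on a false premise: expanding $f(x_0^{-1}x)$ via $f(x_0^{-1})\in f(x_0)^{-1}f(e_\Xi)D(f)^{-1}$ leaves you with $f(x_0^{-1}x)\in f(x_0)^{-1}\,f(e_\Xi)D(f)^{-1}\,f(x)\,D(f)$, and the elements of $V_\alpha$ are \emph{not} uniformly bounded --- $V_\alpha$ has uniformly bounded \emph{diameter}, but its points lie at arbitrarily large distance from $e_\Lambda$, and conjugation by such elements is not coarsely controlled in a merely left-invariant metric. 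So ``coarse invariance under bounded perturbations'' cannot absorb the term $f(x_0)^{-1}\bigl(f(e_\Xi)D(f)^{-1}\bigr)f(x_0)$. The paper avoids the conjugation entirely by running the computation in the opposite direction (see \eqref{first long}): start from the known containment $f(x_0)^{-1}f(x)\in \overline{B'}(e_\Lambda,S)\cap\Lambda^2$ (which uses only left-invariance and $\diam V_\alpha\le S$), insert $f(x_0^{-1})^{-1}f(x_0^{-1})$ in the middle, and apply \eqref{Drho-1} and \eqref{Drho1} to get $f(x_0)^{-1}f(x)\in D(f)f(e_\Xi)^{-1}\,f(x_0^{-1}x)\,D(f)^{-1}$; solving for $f(x_0^{-1}x)$ yields $f(x_0^{-1}x)\in f(e_\Xi)D(f)^{-1}\bigl(\overline{B'}(e_\Lambda,S)\cap\Lambda^2\bigr)D(f)$, where every correcting set is finite and anchored at the identity.

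Even after that repair there is a second missing step: you land only in $f^{-1}(F_S)$ for a finite set $F_S\supseteq f(e_\Xi)D(f)^{-1}D(f)$ depending on the scale $S$, and it is not automatic that $\asdim f^{-1}(F_S)\le\asdim K$ (uniformly in $S$-dependent data), because preimages under a quasimorphism of nearby points need not be close to one another; coarse invariance of $\asdim$ cannot be cited here. The paper supplies a separate argument (\eqref{elements preimage} through \eqref{R nbhd}): pick one representative $\xi_{ij\ell}$ in each nonempty fiber $f^{-1}(p)$ with $p\in F_S$, and use \eqref{Drho1} twice to show any other $\eta$ in that fiber satisfies $f(\eta\xi_{ij\ell}^{-1})\in f(e_\Xi)D(f)^{-1}D(f)$, hence $d(\eta,K)\le d(\eta,\eta\xi_{ij\ell}^{-1})=d(\xi_{ij\ell},e_\Xi)$, so $f^{-1}(F_S)\subseteq N_R(K)$ with $R$ the maximum over the finitely many representatives, and Corollary~\ref{cor: asdim prop} finishes. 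This fiber argument is absent from your proposal and cannot be waved away by ``bounded perturbation''; without it (and without the corrected defect computation above) the proof is incomplete. (The strict/non-strict bookkeeping in the $R'$-disjointness of the $W_\alpha$ is a cosmetic issue by comparison.)
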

 
Note that $D(f)^{-1}$ is the set of inverses of all elements od $D(f)$, and that a general quasimorphism mentioned in Theorem \ref{main} need not be symmetric nor unital.
We prove this theorem in the last section, Section $5$, in which we also phrase a corollary of Theorem \ref{main} (Corollary \ref{cor main}), stating that whenever $f:G\to H$ is a quasimorphism of countable groups, then $\asdim G\leq \asdim H +  \asdim\ \! \left(f^{-1}\left( f(e_\Xi)D(f)^{-1}D(f)\right)\right)$. 
In order to get to the proof of Theorem \ref{main}, we first introduce asymptotic dimension, coarse equivalences and coarsely Lipschitz maps in Section $2$. Section $3$ contains the basic facts on approximate groups and on quasimorphisms between groups, and between approximate groups. Section $4$ is devoted to a lemma connecting general quasimorphisms of countable approximate groups with coarsely Lipschitz maps, allowing us to use Theorem \ref{AbstractHurewicz} to finish the proof of Theorem \ref{main} in Section~5.

\section{Coarsely Lipschitz maps and asymptotic dimension of metric spaces}
We use the words \emph{map} and \emph{function} interchangeably, though perhaps the word ``function'' is used more often when its domain and codomain are subsets of $\R$ or $\R\times \R$. Also, for $R>0$, we use the common notation $N_R(A)$ for an open $R$-neighborhood of a set $A$ in a metric space $(X,d)$, as well as $B(x, R)$ for an open ball and $\overline B(x,R)$ for a closed ball centered at a point $x$ and with radius $R$. If we need to emphasize the metric, we write $B_d(x, R)$ and $\overline B_d(x,R)$.

\begin{definition}\label{def: coarsely - maps} 
Let $(X, d_X)$, $(Y, d_Y)$ be metric spaces and let $f: X \to Y$ be a map. 
\begin{enumerate}[(i)]
\item If there exists a non-decreasing function $\Phi_+: [0, \infty) \to [0, \infty)$ such that $\lim_{t \to \infty} \Phi_+(t) = \infty$, and such that
$d_Y(f(x), f(x')) \leq \Phi_+(d_X(x,x'))$ for all $x,x'\in X$, then we say that $f$ is \emph{coarsely Lipschitz}.

\item If there exist non-decreasing functions $\Phi_-, \ \Phi_+: [0, \infty) \to [0, \infty)$ such that $\lim_{t \to \infty} \Phi_i(t) = \infty$, for $i=-,+$, and such that
$\Phi_-(d_X(x,x')) \leq d_Y(f(x), f(x')) \leq \Phi_+(d_X(x,x'))$ for all $x,x'\in X$, then we say that $f$ is a \emph{coarse embedding}.

\item If $f$ is a coarse embedding which is also \emph{coarsely surjective}, i.e., if there exists a $C\geq 0$ such that $N_C(f(X))=Y$, then $f$ is called a \emph{coarse equivalence}.
\end{enumerate}
\end{definition}

Two metric spaces $X$ and $Y$ are said to be \emph{coarsely equivalent} if there is a coarse equivalence between them, in which case we write $X \overset{CE}{\approx} Y$, and we also say that they belong to the same \emph{coarse class} (of spaces). Properties of metric spaces that are preserved by coarse equivalences are referred to as \emph{coarse invariants}.

Coarsely Lipschitz maps can be characterized without mentioning a function $\Phi_+$ (see \cite[Prop.\ 3.A.5]{CdlH}):
\begin{lemma} \label{characterization-CL}
A map $f:X\to Y$ between metric spaces is coarsely Lipschitz if and only if for every $t> 0$ there exists an $s> 0$ such that whenever $x, x' \in X$ satisfy $d_X(x, x')\leq t$, then $d_Y (f(x),f(x'))\leq s$. 
\end{lemma}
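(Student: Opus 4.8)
The statement is an equivalence, so the plan is to treat the two implications separately; the forward one is essentially a definition-chase, while the converse carries the small amount of real content.

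For the forward direction, suppose $f$ is coarsely Lipschitz, witnessed by a non-decreasing $\Phi_+$ with $d_Y(f(x),f(x'))\leq \Phi_+(d_X(x,x'))$ for all $x,x'$. Given $t>0$, I would take $s\coloneqq \max\{\Phi_+(t),1\}>0$: if $d_X(x,x')\leq t$, then monotonicity of $\Phi_+$ yields $d_Y(f(x),f(x'))\leq \Phi_+(d_X(x,x'))\leq \Phi_+(t)\leq s$. Note that the growth condition $\lim_{t\to\infty}\Phi_+(t)=\infty$ is never used here; only monotonicity matters.

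For the converse, I would manufacture a witness $\Phi_+$ directly from the quantified hypothesis. The natural candidate is the modulus
\[
\psi(t)\coloneqq \sup\{\, d_Y(f(x),f(x')) : x,x'\in X,\ d_X(x,x')\leq t\,\}, \qquad t\geq 0.
\]
The hypothesis says precisely that for each $t>0$ there is an $s>0$ bounding every quantity in the indexing set, so $\psi(t)<\infty$; moreover $\psi$ is non-decreasing, since increasing $t$ only enlarges the set over which the supremum is taken. By construction $d_Y(f(x),f(x'))\leq \psi(d_X(x,x'))$ for every pair.

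The one point requiring care --- and the only genuine obstacle --- is that $\psi$ itself need not tend to infinity: if $f$ has bounded image (for instance if $f$ is constant), then $\psi$ is bounded and fails the growth condition of Definition~\ref{def: coarsely - maps}(i). I would fix this with the cheap device of adding a diverging term, setting $\Phi_+(t)\coloneqq \psi(t)+t$. This $\Phi_+$ remains non-decreasing, now satisfies $\Phi_+(t)\geq t\to\infty$ as $t\to\infty$, and still dominates: $d_Y(f(x),f(x'))\leq \psi(d_X(x,x'))\leq \Phi_+(d_X(x,x'))$. This exhibits $f$ as coarsely Lipschitz and finishes the argument.
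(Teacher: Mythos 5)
Your argument is correct. The paper does not actually prove this lemma --- it is quoted from \cite[Prop.\ 3.A.5]{CdlH} --- but your proof is the standard one: the forward direction is a definition-chase using only monotonicity of $\Phi_+$, and for the converse you correctly identify the one real issue (the supremum modulus $\psi$ need not diverge) and repair it by passing to $\psi(t)+t$. The only cosmetic point is that $\psi(t)$ should be declared to be $0$ when the indexing set is empty (e.g.\ if $X$ has at most one point), so that $\Phi_+$ is genuinely a function $[0,\infty)\to[0,\infty)$; with that convention the proof is complete.
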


\medskip

Let us now state the definition for asymptotic dimension of a metric space, known as the \emph{coloring definition} of $\asdim$ (for other equivalent definitions of $\asdim$, see, for example \cite{BellDran1}).

\begin{definition}\label{def: asdim}
Let $n\in\N_0$. We say that a metric space \emph{$(X, d)$ has asymptotic dimension at most $n$}, and we write $\asdim X\leq n$, if for every $r>0$ there is a cover $\cU$ of $X$ such that:
\begin{enumerate}
\item $\cU$ can be written as $\cU=\bigcup_{i=1}^{n+1} \cU_i$, which is a union of $n+1$ subfamilies where each subfamily $\cU_i$ is $r$-disjoint, that is, if $U$ and $U'$ are two different elements of $\cU_i$, then $d(U,U')\geq r$, and
\item $\cU$ is uniformly bounded, i.e., there exists a $D>0$ such that $\diam U\leq D$, for all $U\in \cU$.
\end{enumerate}
\end{definition}

Since this $D$  in Definition \ref{def: asdim} depends on the choice of $r$, we can introduce a function $D_X:(0, \infty) \to (0, \infty)$ by $D_X(r):=D$, and rephrase Definition \ref{def: asdim} as follows:
for $n\in \N_0$, we say that $\asdim X\leq n$ if there is a function $D_X:(0, \infty) \to (0, \infty)$ such that for every $r>0$ there is a cover 
 $\cU=\bigcup_{i=1}^{n+1} \cU_i$ of $X$ such that each subfamily $\cU_i$ is $r$-disjoint and the diameter of every element of $\cU$ is bounded above by $D_X(r)$.

We refer to this function $D_X$ as an \emph{$n$-dimensional control function of $X$}.

We say that $\asdim X=n$ if this $n$ is the smallest number satisfying Definition \ref{def: asdim}, and if there is no such $n\in \N_0$, we say that $\asdim X=\infty$.

For detailed properties of asymptotic dimension, see, for example, \cite{BellDran1}. We will mention some that are relevant in this paper.

\begin{lemma}\label{lem: asdim properties}
\begin{enumerate}
\item  If $(X,d_X)$ is a metric space and $A\subseteq X$, then $\asdim A \leq \asdim X$.
\item If $(X,d_X)$ and $(Y, d_Y)$ are coarsely equivalent metric spaces, then $\asdim X =\asdim Y$, i.e., $\asdim$ is a coarse invariant.
\end{enumerate}
\end{lemma}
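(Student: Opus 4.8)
For part (1), the plan is to take any $n \geq \asdim X$ and show $\asdim A \leq n$. Given $r>0$, I would start from a cover $\cU = \bigcup_{i=1}^{n+1}\cU_i$ of $X$ witnessing $\asdim X \leq n$ at scale $r$, so that each $\cU_i$ is $r$-disjoint and $\diam U \leq D_X(r)$ for all $U \in \cU$. Restricting everything to $A$ via $\cU_i' := \{U \cap A \mid U \in \cU_i\}$ and $\cU' := \bigcup_{i=1}^{n+1}\cU_i'$, I observe that $\cU'$ covers $A$ because $\cU$ covers $X \supseteq A$; that $\diam(U\cap A) \leq \diam U \leq D_X(r)$ since $U\cap A \subseteq U$; and that $d(U\cap A, U'\cap A) \geq d(U, U') \geq r$ because distances between subsets only increase, so each $\cU_i'$ remains $r$-disjoint. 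Hence $D_X$ itself serves as an $n$-dimensional control function for $A$, giving $\asdim A \leq n$; taking $n = \asdim X$ finishes (1).

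For part (2), I would fix a coarse equivalence $f : X \to Y$ with control functions $\Phi_-,\Phi_+$ as in Definition \ref{def: coarsely - maps}(ii) and coarse surjectivity constant $C \geq 0$, and establish the two inequalities $\asdim X \leq \asdim Y$ and $\asdim Y \leq \asdim X$ separately. For the first, set $n := \asdim Y$, fix $r>0$, and choose a scale $s > \Phi_+(r)$. I take a cover $\cV = \bigcup_{i=1}^{n+1}\cV_i$ of $Y$ that is $s$-disjoint in each subfamily and bounded by $D_Y(s)$, and pull it back along $f$ to $\cU_i := \{f^{-1}(V) \mid V \in \cV_i\}$. This covers $X$ since $\cV$ covers $f(X)$. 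If $x \in f^{-1}(V)$ and $x'\in f^{-1}(V')$ with $V \neq V'$ in $\cV_i$, then $d_Y(f(x),f(x')) \geq s$, so $\Phi_+(d_X(x,x')) \geq s > \Phi_+(r)$ forces $d_X(x,x') \geq r$ by monotonicity of $\Phi_+$, and each $\cU_i$ is $r$-disjoint. For boundedness, $x,x' \in f^{-1}(V)$ give $\Phi_-(d_X(x,x')) \leq d_Y(f(x),f(x')) \leq D_Y(s)$, and since $\Phi_- \to \infty$ this bounds $d_X(x,x')$ by a finite quantity $D_X(r)$ depending only on $r$. Thus $\asdim X \leq n$.

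For the reverse inequality I would push covers forward. Set $n := \asdim X$, fix $r>0$, and this time pick a scale $r'$ in $X$ large enough that $\Phi_-(r') \geq r + 2C$, which is possible because $\Phi_- \to \infty$. Starting from a cover $\cU = \bigcup_{i=1}^{n+1}\cU_i$ of $X$ that is $r'$-disjoint in each subfamily and bounded by $D_X(r')$, I form $\cW_i := \{N_C(f(U)) \mid U \in \cU_i\}$. Coarse surjectivity $N_C(f(X)) = Y$ guarantees that $\bigcup_i \cW_i$ covers $Y$, while $\diam N_C(f(U)) \leq \Phi_+(\diam U) + 2C \leq \Phi_+(D_X(r')) + 2C$ gives uniform boundedness. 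For $r$-disjointness, given $p \in N_C(f(U))$ and $q \in N_C(f(U'))$ with $U \neq U'$ in $\cU_i$, I choose $x \in U$, $x' \in U'$ with $d_Y(p,f(x)) \leq C$ and $d_Y(q,f(x')) \leq C$; then the triangle inequality and the lower control yield $d_Y(p,q) \geq \Phi_-(d_X(x,x')) - 2C \geq \Phi_-(r') - 2C \geq r$. Hence $\asdim Y \leq n$, and combining the two inequalities gives $\asdim X = \asdim Y$.

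The cover surgery in all three arguments is routine; the genuine content, and the one place needing care, is the scale bookkeeping, i.e.\ turning the divergence and monotonicity of $\Phi_\pm$ into quasi-inverse estimates. Concretely, the crux is that $\Phi_+$ converts $s$-disjointness downstairs into $r$-disjointness upstairs (and vice versa), while $\Phi_-$ is exactly what keeps the pulled-back or pushed-forward pieces uniformly bounded. The surjectivity constant $C$ enters only in the pushforward direction, where it both ensures covering of $Y$ and contributes the additive $2C$ that must be absorbed when choosing $r'$.
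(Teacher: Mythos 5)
Your proof is correct: the restriction argument for (1), and in (2) the pullback of an $s$-disjoint cover with $s>\Phi_+(r)$ together with the pushforward of an $r'$-disjoint cover with $\Phi_-(r')\ge r+2C$, including the absorption of the $2C$ from coarse surjectivity, all check out. The paper gives no proof of this lemma at all — it simply cites Bell--Dranishnikov as the standard reference — so your argument is exactly the routine proof that citation stands in for; the only remark worth adding is that your pullback half in fact establishes the slightly stronger statement that any coarse embedding (no surjectivity needed) already yields $\asdim X \le \asdim Y$.
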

\begin{corollary}\label{cor: asdim prop}
If $(X,d_X)$ is a metric space and $R>0$, then for any $A\subseteq X$, $\asdim N_R(A)=\asdim A$.
\end{corollary}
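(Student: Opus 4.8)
The plan is to exhibit $A$ and $N_R(A)$ as coarsely equivalent metric spaces, so that the equality of asymptotic dimensions follows at once from the coarse invariance recorded in Lemma \ref{lem: asdim properties}(2). The natural candidate for the coarse equivalence is the inclusion map $\iota\colon A \hookrightarrow N_R(A)$, where both spaces are equipped with the metric restricted from $(X, d_X)$.

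First I would observe that, since the metric on each of $A$ and $N_R(A)$ is the restriction of $d_X$, the inclusion $\iota$ is an isometric embedding: $d_X(\iota(a),\iota(a'))=d_X(a,a')$ for all $a,a'\in A$. In particular, taking $\Phi_-(t)=\Phi_+(t)=t$ in Definition \ref{def: coarsely - maps}(ii) shows that $\iota$ is a coarse embedding.

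Next I would verify coarse surjectivity in the sense of Definition \ref{def: coarsely - maps}(iii). By the definition of the open $R$-neighborhood, every point $x\in N_R(A)$ satisfies $\dist(x,A)<R$, hence lies within distance $R$ of a point of $A=\iota(A)$. Computing the $R$-neighborhood of $\iota(A)$ inside the codomain therefore gives $N_R(\iota(A))=N_R(A)$, so the coarse surjectivity constant may be taken to be $C=R$. Combining the two steps, $\iota$ is a coarse equivalence, so $A\overset{CE}{\approx}N_R(A)$, and Lemma \ref{lem: asdim properties}(2) yields $\asdim A=\asdim N_R(A)$.

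There is essentially no hard step here: the corollary is a direct application of coarse invariance. The only point demanding a little care is the verification of coarse surjectivity, where one reads off the constant $C=R$ straight from the definition of $N_R(A)$. It is also worth noting that the inequality $\asdim A\le \asdim N_R(A)$ already follows independently from the monotonicity in Lemma \ref{lem: asdim properties}(1), since $A\subseteq N_R(A)$; thus the genuine content of the corollary is the reverse bound, which is exactly what the coarse equivalence supplies.
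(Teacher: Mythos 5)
Your proposal is correct and is exactly the paper's argument: the paper's proof consists of the single sentence that the inclusion $A\hookrightarrow N_R(A)$ is a coarse equivalence, and you have simply spelled out the (routine) verification via Definition \ref{def: coarsely - maps} together with Lemma \ref{lem: asdim properties}(2).
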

\begin{proof}
The inclusion $A\hookrightarrow N_R(A)$ is a coarse equivalence.
\end{proof}
\medskip

Next we introduce the notion of a family of subsets of a metric space satisfying the same upper bound for their $\asdim$ \emph{uniformly} (\cite[Section 2]{BellDranOnAsdimOfGroups}):
\begin{definition}\label{def: uniform asdim}
Let $n\in\N_0$, let $J$ be some indexing set and let $\mathbb{X}=\{X_\alpha \ | \ \alpha\in J\}$ be a family of subsets of a metric space $(X,d)$. We say that \emph{the asymptotic dimension of the family $\mathbb{X}$ is uniformly bounded by $n$} (or that $\mathbb{X}$ satisfies $\asdim \mathbb{X}\leq n$ \emph{uniformly}), and write $\asdim \mathbb{X}\overset{u}{\leq} n$, if for any $r>0$ there exists $D>0$ such that for every $\alpha \in J$ there is a cover $\cU_\alpha$ of $X_\alpha$ which satisfies the two properties of Definition \ref{def: asdim} with respect to these $r$ and $D$. That is, there exists a function $D_\mathbb{X}:(0,\infty)\to (0,\infty)$ serving as an $n$-dimensional control function simultaneously for all $X_\alpha$, $\alpha\in J$.
\end{definition}

\medskip

Next we introduce the notion of an \emph{$n$-dimensional control function for a map} between metric spaces (\cite{BDLM} Definition 4.4).

\begin{definition}\label{def: control of function}
Let $f:(X,d_X)\to (Y,d_Y)$ be a map between metric spaces and let $n\in\N_0$. An \emph{$n$-dimensional control function of $f$} is a function $D_f:(0, \infty)\times(0, \infty) \to (0,\infty)$ such that for all $r_X>0$ and $R_Y>0$ we have the following: for any subset $A\subseteq X$ with $\diam f(A)\leq R_Y$, this $A$ can be expressed as the union of $n+1$ sets with $r_X$-components of these sets being $D_f(r_X,R_Y)$-bounded.

The $r_X$-components of a set $B\subseteq X$ are the maximal $r_X$-connected subsets of $B$, where a set $C$ is $r_X$-connected if any two elements $x,x'\in C$ can be connected in $C$ by an $r_X$-chain, that is, there exists a sequence of points $x=x_1, x_2, \ldots ,x_{k-1},x_k=x'$ in $C$ such that $d_X(x_i,x_{i+1})\leq r_X$, for all $i<k$.
\end{definition}

Note that $\diam f(A)\leq R_Y$ means that $f(A)\subseteq \overline{B}_{d_Y}(f(a), R_Y)$, for any $a\in A\subseteq X$. 

\medskip

In the next lemma and its proof we will use shorter notation for open balls $B(x, R):=B_{d_X}(x, R)$  in $(X,d_X)$ and $B'(y, R):=B_{d_Y}(y,R)$ in $(Y,d_Y)$, for $x\in X$, $y\in Y$, $R>0$.

\begin{lemma}\label{lemma: control}
Let $n\in \N_0$ and let a map $f:(X,d_X)\to (Y,d_Y)$ have the property that, for each $R>0$, 
$$\asdim \{f^{-1}(B'(f(x), R)) \ | \ x\in X\} \overset{u}{\leq} n.$$
Then $f$ has an $n$-dimensional control function. 
\end{lemma}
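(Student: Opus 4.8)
The plan is to unwind both definitions and reduce the statement to a single application of the uniform asymptotic dimension hypothesis. Fix $r_X>0$ and $R_Y>0$; the task is to produce a value $D_f(r_X,R_Y)$ that works simultaneously for every $A\subseteq X$ with $\diam f(A)\leq R_Y$. The first step is the observation that any such $A$ sits inside one of the sets appearing in the hypothesis. Indeed, if $A\neq\emptyset$, pick any $a\in A$; then for every $x\in A$ we have $d_Y(f(x),f(a))\leq \diam f(A)\leq R_Y< R_Y+1$, so $f(A)\subseteq B'(f(a),R_Y+1)$ and hence $A\subseteq f^{-1}(B'(f(a),R_Y+1))$. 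Thus it suffices to control the members of the family $\{f^{-1}(B'(f(x),R))\mid x\in X\}$ for the single radius $R:=R_Y+1$.

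Second, I would apply the hypothesis with this $R$ and with disjointness parameter $2r_X$ (the reason for doubling is explained below). The uniform bound $\asdim\{f^{-1}(B'(f(x),R))\mid x\in X\}\overset{u}{\leq} n$ then yields a single $D>0$, depending only on $r_X$ and $R_Y$, such that every $f^{-1}(B'(f(x),R))$ admits a cover $\cU_x=\bigcup_{i=1}^{n+1}\cU_{x,i}$ in which each subfamily $\cU_{x,i}$ is $2r_X$-disjoint and every member has diameter at most $D$. I would then set $D_f(r_X,R_Y):=D$. For the fixed $A$ and the chosen $a\in A$, intersecting this cover of $f^{-1}(B'(f(a),R))$ with $A$ produces the sets $A_i:=\bigcup_{U\in\cU_{a,i}}(U\cap A)$, so that $A=\bigcup_{i=1}^{n+1}A_i$; note that passing from $U$ to $U\cap A$ only increases distances between distinct members and decreases diameters, so each restricted family $\{U\cap A\mid U\in\cU_{a,i}\}$ remains $2r_X$-disjoint with all members of diameter at most $D$.

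The remaining step, and the only genuinely technical point, is to check that the $r_X$-components of each $A_i$ are $D$-bounded. Here I would argue that every $r_X$-component of $A_i$ is contained in a single member of the $2r_X$-disjoint family covering $A_i$. Since distinct members of that family are pairwise disjoint, each point of $A_i$ lies in a unique member; so if $x=x_1,\dots,x_k=x'$ is an $r_X$-chain in $A_i$ whose consecutive points $x_j,x_{j+1}$ lay in distinct members $W,W'$, then $d_X(x_j,x_{j+1})\geq \dist(W,W')\geq 2r_X$, contradicting $d_X(x_j,x_{j+1})\leq r_X$; hence the whole chain stays in one member. This is precisely where the strict gap matters: an $r_X$-disjoint family only guarantees separation $\geq r_X$, a distance that a single $r_X$-step could bridge, so enlarging the parameter to $2r_X$ in the application of the hypothesis is exactly what makes the chain argument go through. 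Since each member has diameter at most $D$, every $r_X$-component of $A_i$ has diameter at most $D=D_f(r_X,R_Y)$, which is the required conclusion; the case $A=\emptyset$ is trivial.
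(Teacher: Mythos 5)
Your proposal is correct and follows essentially the same route as the paper's proof: enlarge the ball radius so that $A$ lies in one set of the hypothesized family, apply the uniform bound with disjointness parameter $2r_X$, intersect the resulting cover with $A$, and observe that each $r_X$-component must stay inside a single member of a $2r_X$-disjoint subfamily. The only (immaterial) differences are that you enlarge the radius to $R_Y+1$ where the paper uses $2R_Y$, and you spell out the chain argument that the paper only asserts.
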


Before proceeding, note that in the proof of Corollary 4.12 of \cite{BDLM} it is mentioned that having,  for each $R>0$, 
$\asdim \{f^{-1}(B'(y, R)) \ | \ y\in Y\} \overset{u}{\leq} n$
implies that $f$ has an $n$-dimensional control function, but since we are using $f(x)$ instead of $y\in Y$ for centers of our open balls in the formula above, and $f$ need not be surjective, let us prove Lemma \ref{lemma: control}, to be safe.

\begin{proof}[Proof of Lemma \ref{lemma: control}]
According to Definition \ref{def: uniform asdim}, the assumption of this lemma means that, given any $R_Y>0$, there exists a function $D_{R_Y}:(0, \infty)\to(0,\infty)$ such that for each $r_X>0$ and $x\in X$, there is a cover $\cU_x=\bigcup_{i=1}^{n+1} \cU_x^{(i)}$ of $f^{-1}(B'(f(x), R_Y))$ such that each subfamily $\cU_x^{(i)}$ is $r_X$-disjoint and $D_{R_Y}(r_X)$-bounded, i.e., all elements of $\cU_x$ have diameter $\leq D_{R_Y}(r_X)$.

Let us define a function $D_f:(0,\infty)\times(0,\infty)\to (0,\infty)$ by $D_f(r_X, R_Y):=D_{R_Y}(r_X)$.\\
Now we can reformulate the assumption of this lemma as follows: there is a function $D_f:(0,\infty)\times(0,\infty)\to (0,\infty)$ such that for any $x\in X$, for any $r_X>0$ and any $R_Y>0$ we have that $f^{-1}(B'(f(x), R_Y)$ has a cover $\cU_x=\bigcup_{i=1}^{n+1} \cU_x^{(i)}$ such that each subfamily $\cU_x^{(i)}$ is $r_X$-disjoint and $D_f(r_X, R_Y)$-bounded.

We would like to show that for a function $\varphi:(0,\infty)\times(0,\infty)\to (0,\infty)\times(0,\infty)$ given by $\varphi(t,s)=(2t,2s)$ we get that $D_f\circ \varphi$ is an $n$-dimensional control function of our initial map $f:X\to Y$. 

So let us take any $r_X>0$, $R_Y>0$ and let $A\subseteq X$ be any subset with $\diam f(A)\leq R_Y$, i.e., such that for all $a, a'\in A$ we have $d_Y(f(a),f(a'))\leq R_Y$. This implies that $f(A)\subseteq \overline{B'} (f(a), R_Y)\subseteq B'(f(a), 2R_Y)$, for some $a\in A\subseteq X$. Therefore $A\subseteq f^{-1}(f(A))\subseteq f^{-1}(B'(f(a),2R_Y))$.

We can now use our reformulated assumption for $2r_X>0$ and $2R_Y>0$, that is, for the family of subsets 
$\{f^{-1}(B'(f(x), 2R_Y)) \ | \ x\in X\}$: we know that for each $x\in X$ there is a cover $\cU_x=\bigcup_{i=1}^{n+1} \cU_x^{(i)}$ of $f^{-1}(B'(f(x), 2R_Y))$ such that each subfamily $\cU_x^{(i)}$ is $2r_X$-disjoint and $D_f(2r_X, 2R_Y)$-bounded. So in particular, for $A\subseteq f^{-1}(B'(f(a),2R_Y))$ there is a cover  $\cU_a=\bigcup_{i=1}^{n+1} \cU_a^{(i)}$ of $f^{-1}(B'(f(a),2R_Y))$ such that each subfamily 
$\cU_a^{(i)}$ is $2r_X$-disjoint and $D_f(2r_X, 2R_Y)$-bounded. Therefore $A$ can be expressed as the union of $n+1$ sets, $A= \bigcup_{i=1}^{n+1} A\cap(\cup \cU_a^{(i)})$, where for each two different elements $U,U'\in\cU_a^{(i)}$ we have $d_X(U,U')\geq 2r_X$ and also $\diam U\leq D_f(2r_X,2R_Y)$, for all $U\in \cU_a$. This means that each $r_X$-component of every set $A\cap(\cup \cU_a^{(i)})$ is contained within some $A\cap U$, for some $U\in \cU_a^{(i)}$, so each $r_X$-component of every set $A\cap(\cup \cU_a^{(i)})$ is bounded by the upper bound of all $\diam U$, for $U\in \cU_a$, that is, each $r_X$-component of $A\cap(\cup \cU_a^{(i)})$ has diameter $\leq D_f(2r_X,2R_Y)$. Therefore, by Definition \ref{def: control of function}, $D_f\circ \varphi$ is an $n$-dimensional control function of our initial map $f:X\to Y$. 
\end{proof}

Recall that in Theorem \ref{AbstractHurewicz} we mentioned $\asdim f$, which is defined by $\asdim f\coloneqq \sup \{ \asdim A \mid A \subseteq X \text{ and } \asdim f(A) = 0\}.$ Corollary 4.10 from \cite{BDLM} states:

\begin{lemma}\label{le: asdim f vs control}
If $f:X\to Y$ is a coarsely Lipschitz map of metric spaces and $n\in\N_0$, then $\asdim f \leq n$ if and only if $f$ has an $n$-dimensional control function.
\end{lemma}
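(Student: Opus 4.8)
The plan is to prove the two implications separately; the implication ``control function $\Rightarrow\asdim f\le n$'' is the elementary one, while ``$\asdim f\le n\Rightarrow$ control function'' carries the real content, namely upgrading a pointwise bound into a uniform one.

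First I would prove ($\Leftarrow$). Assume $f$ has an $n$-dimensional control function $D_f$, and let $A\subseteq X$ satisfy $\asdim f(A)=0$; I must show $\asdim A\le n$. Fix $r_X>0$. Since $f$ is coarsely Lipschitz, Lemma~\ref{characterization-CL} yields $s>0$ with $d_X(x,x')\le r_X\Rightarrow d_Y(f(x),f(x'))\le s$. Because $\asdim f(A)=0$, at the scale $s+1$ there is a uniformly bounded, $(s+1)$-disjoint cover $\cV$ of $f(A)$, say with $\diam V\le R_Y$ for all $V\in\cV$. The sets $f^{-1}(V)\cap A$ cover $A$ and each has image of diameter $\le R_Y$, so by $D_f$ each splits into $n+1$ pieces whose $r_X$-components are $D_f(r_X,R_Y)$-bounded. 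Collecting, for $i=1,\dots,n+1$, all $r_X$-components arising from the $i$-th pieces over all $V$ produces a family $\cW_i$. If a point of $f^{-1}(V)$ and a point of $f^{-1}(V')$ with $V\neq V'$ were within $r_X$ in $X$, their images would be within $s$, contradicting $d_Y(V,V')\ge s+1>s$; hence components from distinct $V$ are genuinely $r_X$-separated, each $\cW_i$ is $r_X$-disjoint, and $\bigcup_i\cW_i$ is a $D_f(r_X,R_Y)$-bounded cover of $A$. As $R_Y$ depends only on $r_X$, this exhibits an $n$-dimensional control function for $A$, whence $\asdim A\le n$; taking the supremum gives $\asdim f\le n$.

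For ($\Rightarrow$), by Lemma~\ref{lemma: control} it suffices to show that for each $R>0$ the family $\mathbb X_R=\{f^{-1}(B'(f(x),R))\mid x\in X\}$ satisfies $\asdim\mathbb X_R\overset{u}{\le}n$. This is where I expect the main obstacle, since $\asdim f\le n$ only gives $\asdim\le n$ for each individual subset with $0$-dimensional image (in particular for each bounded-image member of $\mathbb X_R$), whereas uniformity across the whole family is required.

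I would argue by contradiction. If $\asdim\mathbb X_R\overset{u}{\le}n$ fails, then, reading Definition~\ref{def: uniform asdim} through the (standard) equivalent formulation of $\asdim$ in terms of splitting into $n+1$ pieces with bounded $r$-components, there is a scale $r_0>0$ and members $S_m=f^{-1}(B'(f(z_m),R))$ such that no $(n+1)$-splitting of $S_m$ has all its $r_0$-components $m$-bounded. The centers $f(z_m)$ cannot stay in a bounded region of $Y$: otherwise all $S_m$ would lie in a single preimage $f^{-1}(\overline{B'}(y_0,M))$ of bounded image, which has $\asdim\le n$ and hence one control function bounding all their $r_0$-components, a contradiction. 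Passing to a subsequence with the $f(z_m)$ mutually arbitrarily far apart, the coarse-Lipschitz inequality $d_Y(f(x),f(x'))\le\Phi_+(d_X(x,x'))$ with finite-valued $\Phi_+$ forces the $S_m$ to be mutually far apart in $X$, with separations tending to infinity. For the resulting infinitely separated disjoint union $A^\ast=\bigcup_m S_m$, any uniformly bounded $r_0$-disjoint cover would restrict to uniformly bounded covers of all but finitely many $S_m$, and then of all of them, reinstating the uniform bound we assumed to fail; hence $\asdim A^\ast>n$. Meanwhile $f(A^\ast)$ is a union of $R$-bounded sets with separations tending to infinity, so $\asdim f(A^\ast)=0$. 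This contradicts $\asdim f\le n$. The two delicate points, to be verified carefully, are the spreading-out of the witnesses $S_m$ and the identification of the asymptotic dimension of an infinitely separated disjoint union with the uniform supremum over its pieces.
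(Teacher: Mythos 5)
The paper does not actually prove this lemma: it is quoted as Corollary 4.10 of \cite{BDLM}, so there is no in-paper argument to measure yours against, and a self-contained proof is genuinely added value. Your overall architecture is sound. The ($\Leftarrow$) direction is correct as written: pulling back an $(s+1)$-disjoint uniformly bounded cover of $f(A)$ and applying the control function to each $f^{-1}(V)\cap A$ does yield $\asdim A\le n$, the coarse Lipschitz condition guaranteeing that $r_X$-components coming from distinct $V$ are $r_X$-separated. In the ($\Rightarrow$) direction you correctly identify the real issue (pointwise versus uniform) and the contradiction scheme works, but one claim is stated more weakly than your subsequence extraction requires: you only rule out that the centers $f(z_m)$ all lie in a bounded region, whereas to greedily extract centers that are mutually arbitrarily far apart \emph{while keeping the indices $m_k$ unbounded} you need that every bounded subset of $Y$ contains $f(z_m)$ for only finitely many $m$. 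Fortunately your own argument delivers this stronger statement: if infinitely many $f(z_m)$ lay in some ball $\overline{B'}(y_0,M)$, the corresponding $S_m$ would all sit inside $f^{-1}(B'(y_0,M+R))$, a set with bounded image and hence with $\asdim\le n$, whose single covering bound at scale $r_0$ would be inherited by every $S_m$ it contains --- impossible for the infinitely many (hence arbitrarily large) indices involved. With that adjustment, and granting the standard off-by-$\varepsilon$ equivalence between $r$-disjoint uniformly bounded covers and splittings with bounded $r$-components, the rest goes through: $\asdim f(A^\ast)=0$ because the image consists of $2R$-bounded pieces with all but finitely many pairwise separations exceeding any prescribed threshold (lump the finitely many close ones into a single bounded set), while $\asdim A^\ast\le n$ would hand every $S_{m_k}$ a cover with one common bound $D$, contradicting the choice of $S_{m_k}$ once $m_k\ge D$. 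So I assess the proposal as correct modulo that one tightening, and as an acceptable substitute for the external citation.
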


Therefore we can state the following corollary of Theorem \ref{AbstractHurewicz}, Lemma \ref{lemma: control} and Lemma \ref{le: asdim f vs control}:
\begin{corollary}\label{cor: AbstractHurewicz}
If $n\in \N_0$ and $f:X\to Y$ is a coarsely Lipschitz map between metric spaces with the property that, for each $R>0$, 
$\asdim \{f^{-1}(B'(f(x), R)) \ | \ x\in X\} \overset{u}{\leq} n$, then $\asdim X \leq \asdim Y + n$.
\end{corollary}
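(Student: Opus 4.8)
The goal is to prove Corollary \ref{cor: AbstractHurewicz}, which combines Theorem \ref{AbstractHurewicz}, Lemma \ref{lemma: control}, and Lemma \ref{le: asdim f vs control}. The plan is to chain these three results together, using the coarse Lipschitz hypothesis as the common thread that lets me pass back and forth between the ``$n$-dimensional control function'' language and the ``$\asdim f \leq n$'' language.

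\medskip

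First I would invoke Lemma \ref{lemma: control}: the standing hypothesis is exactly that, for each $R>0$, the family $\{f^{-1}(B'(f(x),R)) \mid x\in X\}$ satisfies $\asdim\overset{u}{\leq} n$, so Lemma \ref{lemma: control} immediately yields that $f$ has an $n$-dimensional control function. Next I would feed this into Lemma \ref{le: asdim f vs control}. Since $f$ is assumed coarsely Lipschitz, the ``if'' direction of that lemma applies, and I conclude $\asdim f \leq n$, where $\asdim f = \sup\{\asdim A \mid A\subseteq X,\ \asdim f(A)=0\}$.

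\medskip

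With $\asdim f \leq n$ in hand and $f$ coarsely Lipschitz, the hypotheses of Theorem \ref{AbstractHurewicz} are satisfied directly: that theorem gives $\asdim X \leq \asdim Y + \asdim f$. Combining this with the bound $\asdim f \leq n$ from the previous step gives $\asdim X \leq \asdim Y + n$, which is the desired conclusion.

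\medskip

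I do not expect a genuine obstacle here, since the corollary is a formal consequence of results already established or cited; the only point requiring care is that each of the three auxiliary results must have its hypotheses verified before being applied, and in particular that the coarse Lipschitz assumption is used twice (once to apply Lemma \ref{le: asdim f vs control} and once for Theorem \ref{AbstractHurewicz}). One should also note the harmless discrepancy between balls centered at $f(x)$ versus at arbitrary $y\in Y$, but this is precisely what Lemma \ref{lemma: control} was proved to absorb, so no additional work is needed.
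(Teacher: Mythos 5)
Your proposal is correct and follows exactly the same route as the paper: Lemma \ref{lemma: control} gives the $n$-dimensional control function, Lemma \ref{le: asdim f vs control} (using the coarse Lipschitz hypothesis) converts this to $\asdim f \leq n$, and Theorem \ref{AbstractHurewicz} then yields $\asdim X \leq \asdim Y + \asdim f \leq \asdim Y + n$. Nothing to add.
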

\begin{proof}
First, from Lemma  \ref{lemma: control} it follows that $f$ has an $n$-dimensional control function. Since $f$ is also coarsely Lipschitz, having an $n$-dimensional control function is equivalent to $\asdim f\leq n$, by Lemma \ref{le: asdim f vs control}. This and Theorem \ref{AbstractHurewicz} give us the final statement.
\end{proof}

\section{Approximate groups and quasimorphisms}
We start with a reminder about notation: if $A$ and $B$ are subsets of a group $(G,\cdot)$, then $AB=A\cdot B:=\{a b\ | \ a\in A,\ b\in B\}$. In particular,  $A^2=A\cdot A=\{ab\ | \ a,b\in A\}$, and $A^k=A^{k-1}A$, for $k\in\N_{\geq 2}$. Also, we use $A^{-1}:=\{a^{-1}\ | \ a\in A\}$, and if $A=A^{-1}$, we say that $A$ is \emph{symmetric}.
The identity element of the group $G$ is written as $e$ or $e_G$, and if $e\in A$, we say that $A$ is \emph{unital}. For $g\in G$, we use 
$gA=g\cdot A:=\{g a\ | \ a\in A\}$. We will sometimes make the operation sign visible to make our formulas easier to understand.

Also, in sections that follow we encounter functions between groups and the subsets of groups, so if $(G,\cdot_G)$ and $(H, \cdot_H)$ are groups, $A$ is a symmetric subset of $G$ and $f:A\to H$ is a set-theoretic function, we say that $f$ is \emph{symmetric} if $f(a^{-1})=f(a)^{-1}$, for all $a\in A$. If $A$ is unital, we say that $f:A\to H$ is \emph{unital} if $f(e_G)=e_H$.

In addition, we need to be cautious about notation on functions: if $(G,\cdot_G)$ and $(H, \cdot_H)$ are groups, $A\subseteq G$, $D\subseteq C\subseteq H$ are some of their subsets and $f:A\to C$ is a function, we need to carefully distinguish between the way we write preimages and inverses. For an element $h\in C\subseteq H$, $f^{-1}(h)$ is the $f$-preimage of the element $h$, so $f^{-1}(h)\subseteq A\subseteq G$,
while for an element $g\in A$, $f(g)^{-1}$ is  the inverse of the element $f(g)\in C\subseteq H$, so $f(g)^{-1}\in H$. 
 More generally, note that $f^{-1}(D)=\{g\in A\ | \ f(g)\in D\}\subseteq A$, while $f(A)^{-1}=\{f(g)^{-1}\ | \ g\in A\}\subseteq H$.
 
 \bigskip

Let us now define approximate subgroups and approximate groups. The following definition for an approximate subgroup of a group is 
 due to T.~Tao (\cite{Tao}):

\begin{definition}\label{DefTao} Let $(G,\cdot)$ be a group and let $k \in \N$. A subset $\Lambda \subseteq G$ is called 
a \emph{$k$-approximate subgroup} of $G$ if
\begin{enumerate}
\item $\Lambda = \Lambda^{-1}$ and $e \in \Lambda$, that is, $\Lambda$ is symmetric and unital, and
\item there exists a finite subset $F \subseteq  G$ such that $\Lambda^2\subseteq \Lambda  F$ and $|F| = k$.
\end{enumerate}
We say that $\Lambda$ is an \emph{approximate subgroup} of $G$ if it is a $k$-approximate subgroup of $G$, for some $k \in \N$. 
\end{definition}

Let us compare this with subgroups: if $H$ is a subgroup of a group $G$, then $H$ is symmetric, unital, and closed under the group operation so $H^2=H$, while for an approximate subgroup $\Lambda$ of $G$, this ``$\Lambda^2=\Lambda$'' is spoiled, but not by much, since $\Lambda^2$ is only \emph{finite set away} from $\Lambda$.
We will focus on countable groups and their infinite approximate subgroups, so while we need the set $F$ from Definition \ref{DefTao} to be finite, the number $k$ of elements of $F$ will not be relevant. Note that for an approximate subgroup $\Lambda$ there is the smallest subgroup $\Lambda^\infty := \bigcup_{k \in \N} \Lambda^k$ of $G$ which contains $\Lambda$, which plays an important role:

\begin{definition}\label{DefApGr}
Let $\Lambda$ be an approximate subgroup of a group $G$. Then  $\Lambda^\infty = \bigcup_{k \in \N} \Lambda^k$ is called the \emph{enveloping group} of $\Lambda$, and the pair $(\Lambda, \Lambda^\infty)$ is called an \emph{approximate group}.
\end{definition}
We say that the approximate group $(\Lambda, \Lambda^\infty)$ is \emph{countable} if $\Lambda$ is countable, which also implies that $\Lambda^\infty$ is countable. We say that $(\Lambda, \Lambda^\infty)$ is \emph{finite} if $\Lambda$ is finite.
The notation  $(\Lambda, \Lambda^\infty)$ is used because we will need a metric on $\Lambda$, which we will introduce by taking a ``nice enough'' metric on $\Lambda^\infty$. However, the asymptotic dimension of $(\Lambda, \Lambda^\infty)$ will be defined as $\asdim$ of $\Lambda$. But before discussing our choice of metrics and introducing $\asdim$ of approximate groups, let us mention some examples (from \cite{CHT} or \cite{Hartnick-Tonic}):

\begin{example} Let $G$ be a group. Then obviously every subgroup $H$ of $G$ is an approximate subgroup of $G$, and $(H,H)$ is an approximate group. Also, any finite symmetric unital subset $F$ of $G$ is an approximate subgroup of $G$, so $(F, F^\infty)$ is an approximate group. Moreover,
if $\Lambda$ is an approximate subgroup of $G$, then $\Lambda^k$ is also an approximate subgroup of $G$, so $(\Lambda^k,\Lambda^\infty)$ is an approximate group, for all $k\in\N$.
\end{example}

\begin{example}
 For the  Baumslag-Solitar group ${\rm BS}(1,2) =  \langle a, b \mid bab^{-1} = a^2 \rangle$, the set
$\Lambda :=  \langle a \rangle \cup \{b,b^{-1}\}$ is symmetric, unital, and $ \Lambda^2 \subseteq \Lambda\cdot \{e, b, b^{-1}, b^{-1}a\}$, so $\Lambda$ is an approximate subgroup of ${\rm BS}(1,2)$, hence $(\Lambda, \Lambda^\infty)=(\Lambda, {\rm BS}(1,2))$ is an approximate group.
\end{example}

\begin{example} 
Let $G$ and $H$ be locally compact groups, 
and let $\Gamma$ be a subgroup of $G \times H$ such that the restriction $\pi_G|_\Gamma$ is injective, where $\pi_G:G\times H\to G$ is the canonical projection. 
If $W$ is any relatively compact symmetric neighborhood of identity in $H$, take all elements of $\Gamma$ within the ``strip'' $G \times W$ in $G \times H$, and project these elements to $G$. Then $\Lambda(\Gamma, W) := \pi_G(\Gamma \cap (G \times W))$ is an approximate subgroup of $G$, so $(\Lambda(\Gamma, W), (\Lambda(\Gamma, W))^\infty)$ is an approximate group.
This construction of $\Lambda(\Gamma, W)$ is referred to as \emph{cut-and-project} construction.
\end{example}

\medskip

Let us proceed by choosing a ``nice enough'' metric on a countable group $\Lambda^\infty$. In \cite[Section 1]{DranSmith} it is explained how we can always choose a left-invariant proper metric on a countable group (which need not be finitely generated), 
so that this metric agrees with discrete topology on the group.
The properness of the metric means that the closed balls are compact, and since we are in a discrete group, all (open or closed) balls with a bounded radius are finite sets.
Moreover, \cite[Prop. 1.1]{DranSmith} gives us:
\begin{proposition} If $d$ and $d'$ are two left-invariant proper metrics on the same countable group $G$, then the identity map from
$(G, d)$ to $(G, d')$ is a coarse equivalence. 
\end{proposition}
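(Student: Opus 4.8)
The plan is to use left-invariance to reduce the statement to a comparison of the two \emph{length functions} $|g|_d:=d(e,g)$ and $|g|_{d'}:=d'(e,g)$, and then to manufacture the required control functions directly from the (finite) balls of the two metrics. Since the identity map is a bijection, it is coarsely surjective with constant $C=0$, so by Definition~\ref{def: coarsely - maps} it is enough to exhibit non-decreasing functions $\Phi_-,\Phi_+:[0,\infty)\to[0,\infty)$ with $\lim_{t\to\infty}\Phi_\pm(t)=\infty$ such that $\Phi_-(d(x,x'))\leq d'(x,x')\leq\Phi_+(d(x,x'))$ for all $x,x'\in G$. If $G$ is finite both metrics are bounded and the claim is trivial, so I would assume $G$ infinite.

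First, by left-invariance of $d$ and $d'$ one has $d(x,x')=|x^{-1}x'|_d$ and $d'(x,x')=|x^{-1}x'|_{d'}$, so as $g=x^{-1}x'$ ranges over $G$ the desired inequalities become simply $\Phi_-(|g|_d)\leq|g|_{d'}\leq\Phi_+(|g|_d)$. I would then set $\Phi_+(t):=\sup\{\,|g|_{d'}\ |\ g\in G,\ |g|_d\leq t\,\}$ and $\Phi_-(t):=\inf\{\,|g|_{d'}\ |\ g\in G,\ |g|_d\geq t\,\}$. Because $d$ is proper and the topology is discrete, the indexing set $\overline B_d(e,t)$ for $\Phi_+$ is finite, so the supremum is attained and $\Phi_+(t)<\infty$; monotonicity of both functions in $t$ is immediate, and the two defining inequalities hold because the relevant $g$ itself belongs to the set over which the sup, resp.\ inf, is taken.

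The main obstacle --- and the only point that is not bookkeeping --- is verifying $\Phi_\pm(t)\to\infty$; this is where properness of $d'$ (together with $G$ being infinite) is genuinely used. For $\Phi_-$, given $N$ the ball $\overline B_{d'}(e,N)$ is a finite set, hence sits inside some finite ball $\overline B_d(e,T_0)$; then every $g$ with $|g|_d>T_0$ satisfies $|g|_{d'}>N$, forcing $\Phi_-(t)\geq N$ for $t>T_0$. For $\Phi_+$, finiteness of every $d'$-ball together with $|G|=\infty$ produces elements of arbitrarily large $d'$-length, and since every $d$-ball is finite such elements must also have arbitrarily large $d$-length; feeding these into $\Phi_+$ and invoking monotonicity gives $\lim_{t\to\infty}\Phi_+(t)=\infty$. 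Assembling the two inequalities shows the identity is a coarse embedding, and with the free coarse surjectivity this makes it a coarse equivalence.
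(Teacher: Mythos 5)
Your argument is correct: the reduction via left-invariance to the two length functions, the definitions $\Phi_+(t)=\sup\{|g|_{d'} : |g|_d\leq t\}$ and $\Phi_-(t)=\inf\{|g|_{d'} : |g|_d\geq t\}$, and the verification that both are finite, non-decreasing, and tend to infinity using the finiteness of balls of a proper metric on a discrete countable group are all sound, and coarse surjectivity is indeed free for a bijection. The paper states this proposition without proof, quoting it from \cite[Prop.~1.1]{DranSmith}, and your argument is precisely the standard one behind that citation, so there is nothing substantive to compare beyond noting agreement.
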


Therefore, we have:
\begin{corollary}\label{ExternalQIType} 
If $d$ and $d'$ are two left-invariant proper metrics on the same countable group $G$, and if $A \subseteq G$ is any subset, then the identity map from $(A, d|_{A \times A})$ to $(A, d'|_{A \times A})$ is a coarse equivalence. 
\end{corollary}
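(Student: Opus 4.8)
The plan is to deduce this directly from the preceding Proposition, which already gives that the identity map $\id : (G,d) \to (G,d')$ is a coarse equivalence; the only work is to check that restricting to the subset $A$ preserves each of the defining properties in Definition \ref{def: coarsely - maps}. First I would unpack what the Proposition provides: since $\id : (G,d) \to (G,d')$ is a coarse equivalence, it is in particular a coarse embedding, so there are non-decreasing functions $\Phi_-, \Phi_+ : [0,\infty) \to [0,\infty)$ with $\lim_{t \to \infty} \Phi_i(t) = \infty$ for $i = -, +$, satisfying
$$\Phi_-(d(x,x')) \leq d'(x,x') \leq \Phi_+(d(x,x')) \quad \text{for all } x, x' \in G.$$

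The key observation is that these inequalities are universally quantified over all pairs $x, x'$ in $G$, and therefore hold a fortiori for all pairs in $A \subseteq G$. Concretely, for any $x, x' \in A$ we have $d|_{A \times A}(x,x') = d(x,x')$ and $d'|_{A \times A}(x,x') = d'(x,x')$, so the very same $\Phi_-$ and $\Phi_+$ witness that the identity map $(A, d|_{A \times A}) \to (A, d'|_{A \times A})$ is a coarse embedding. No new control functions need to be produced; this is simply the general fact that a coarse embedding restricts to any subspace.

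Finally, coarse surjectivity is immediate, since the identity map on $A$ has image exactly $A$: we may take $C = 0$ in Definition \ref{def: coarsely - maps}(iii), as $N_0(\id(A)) = A$. Combining the two facts, the restricted identity is a coarsely surjective coarse embedding, hence a coarse equivalence, which is the claim. I do not anticipate any genuine obstacle here: the whole content is that the defining estimates of a coarse embedding are inherited by every subset, and that the identity is trivially onto its image, so both halves of Definition \ref{def: coarsely - maps} transfer to $A$ without modification.
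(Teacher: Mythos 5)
Your proposal is correct and follows exactly the route the paper intends: the paper derives the corollary directly from the preceding Proposition (presenting it with no further argument), and your explicit verification that the coarse-embedding estimates restrict to any subset $A$ while coarse surjectivity is trivial for the identity on $A$ is precisely the omitted justification.
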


In consequence, we define:
\begin{definition}\label{def: canonical coarse class}
For a countable group $G$,
its \emph{canonical coarse class $[G]_c$} is the coarse class
of the metric space $(G, d)$, where $d$ is some (hence any) left-invariant proper metric on $G$, i.e., 
$[G]_c:=[(G,d)]_c=\{(X,d')\ |\ (X,d') \text{ is a metric space such that } (X,d')\overset{CE}{\approx} (G, d)\}.$ We say that any left-invariant proper metric $d$ on $G$ is a  \emph{canonical metric} on $G$.

\medskip

For a countable approximate group $(\Lambda, \Lambda^\infty)$ and for any subset $A \subseteq \Lambda^\infty$, the \emph{canonical coarse class of $A$} is defined by
$$
[A]_c \coloneqq  [(A, d|_{A \times A})]_c = \{(X,d')\ |\ (X,d') \text{ is a metric space such that } (X,d')\overset{CE}{\approx} (A, d|_{A \times A})\},
$$
where $d$ is some (hence any) left-invariant proper metric on the countable group $\Lambda^\infty$.
In particular, this defines the canonical coarse class of $\Lambda$,
$[\Lambda]_c \coloneqq  [(\Lambda, d|_{\Lambda \times \Lambda})]_c$.
For any left-invariant proper metric $d$ on $\Lambda^\infty$, we refer to the metric $d|_{\Lambda\times\Lambda}$ as a \emph{canonical metric} on $\Lambda$.
\end{definition}

\medskip

We now define the asymptotic dimension of a countable group and any of its subsets, as well as $\asdim$ of a countable approximate group:
\begin{definition}\label{def: asdim - group}
For a countable group $G$, we define its asymptotic dimension by
\[\asdim G := \asdim \ \! (G, d),\] 
where $d$ is any left-invariant proper metric on $G$. We also define $\asdim \ \! [G]_c:= \asdim \ \! (G, d)$.

For a countable approximate group $(\Lambda, \Lambda^\infty)$, we define its asymptotic dimension by
\[\asdim \Lambda := \asdim \ \! (\Lambda, d|_{\Lambda \times \Lambda}),
\]
where $d$ is any left-invariant proper metric on $\Lambda^\infty$. We also define $\asdim \ \! [\Lambda]_c := \asdim \ \! (\Lambda, d|_{\Lambda\times\Lambda})$.
More generally, if $A$ is any subset of $\Lambda^\infty$,  we define the asymptotic dimension of $A$ by $\asdim A := \asdim \ \! (A, d|_{A \times A})=:\asdim\ \! [A]_c$.
\end{definition}

\bigskip
Our next goal is to define quasimorphisms of groups and approximate groups. Behind this definition is an idea of S.~Ulam (\cite{Ulam}), where a set-theoretic function $f:G\to H$ between groups need not satisfy $f(xy)=f(x)f(y)$ for all $x,y\in G$, but $f$ should ``approximately'' satisfy it by being a ``finite number of elements away'' from it, which is illustrated by the following definition (\cite[Definition 2.38]{CHT}).

\begin{definition}\label{DefQM} A function $f: G \to H$ between groups $G$ and $H$ is called a \emph{quasimorphism} if its \emph{defect set}
\begin{equation}\label{LDefectSet}
D(f) \coloneqq  \{f(y)^{-1}f(x)^{-1}f(xy) \mid x, y \in G\}
\end{equation}
 is finite.
\end{definition}

If $f:G\to H$ was a group homomorphism, that is, satisfying $f(xy)=f(x)f(y)$ for all $x,y\in G$, then we would have $f(y)^{-1}f(x)^{-1}f(xy)=e_H$, for all $x,y\in G$, but Definition \ref{DefQM} allows the set $D(f)=\{f(y)^{-1}f(x)^{-1}f(xy) \mid x, y \in G\}$ to contain more than one element, as long as this set is finite. Note that, in general, $D(f)$ is not symmetric nor unital.

\begin{remark} The function with property from Definition \ref{DefQM} could be called a \emph{left-quasimor\-phism}, while $D(f)$ could be called a \emph{left-defect set}, and we could define a \emph{right-quasimorphism} by asking that the \emph{right-defect set}
\begin{equation}\label{RDefectSet}
D^*(f) \coloneqq  \{f(x)f(y)f(xy)^{-1} \mid x, y \in G\}
\end{equation}
should be finite. However, N.~Heuer has shown in \cite[Prop.\ 2.3]{Heuer1} that the two notions coincide.
\end{remark}

\begin{remark}\label{Drho} Using
$f(xy) = f(x)f(y)f(y)^{-1}f(x)^{-1}f(xy)$, we get that the defect set $D(f)$ has the following properties:
\begin{equation}\label{Drho1}
f(xy) \in f(x)f(y)D(f) \qand f(x)f(y) \in f(xy)D(f)^{-1}, \quad \text{ for all }x,y \in G.
\end{equation}
Also, from the second part of \eqref{Drho1}, we have that
\begin{equation}\label{Drho-1}
 f(y)^{-1}f(x)^{-1}\in D(f)f(xy)^{-1}, \quad \text{ for all }x,y \in G.
\end{equation}
\end{remark}

\medskip
If $H$ is a countable group with a left-invariant proper metric $d$, then a subset of $(H,d)$ is finite if and only if it is contained in a ball of bounded radius centered at $e_H$. Therefore we have
 (\cite[Proposition 2.47]{CHT}):
\begin{proposition}\label{prop: quasi alter}
If $G$ is a group and $H$ is a countable group with a left-invariant proper metric $d$, then a function $f:G\to H$ is a quasimorphism if and only if there exists a constant $C\geq 0$ such that 
$$
d(f(xy),f(x)f(y))=d( f(y)^{-1}f(x)^{-1}f(xy),e_H)\leq C, \ \ \text{ for all } x,y \in G.
$$
\end{proposition}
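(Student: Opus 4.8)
The plan is to split the statement into two observations: first, that the displayed equality is nothing but a manifestation of left-invariance of $d$; and second, that the equivalence between being a quasimorphism and satisfying a uniform bound is exactly the translation, via the remark preceding the proposition, between ``$D(f)$ is finite'' and ``$D(f)$ lies in a bounded ball about $e_H$''.

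First I would verify the equality
$$
d(f(xy),f(x)f(y))=d(f(y)^{-1}f(x)^{-1}f(xy),e_H).
$$
Since $d$ is left-invariant on $H$, we have $d(a,b)=d(ca,cb)$ for all $a,b,c\in H$. Applying this with $a=f(xy)$, $b=f(x)f(y)$, and the left multiplier $c=(f(x)f(y))^{-1}=f(y)^{-1}f(x)^{-1}$, the right-hand argument $cb$ collapses to $e_H$ while $ca=f(y)^{-1}f(x)^{-1}f(xy)$, giving the claimed identity for every $x,y\in G$. This already reduces the proposition to controlling the quantity $d(f(y)^{-1}f(x)^{-1}f(xy),e_H)$, i.e.\ the distance from elements of the defect set $D(f)$ to $e_H$.

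For the forward direction, assuming $f$ is a quasimorphism, $D(f)$ is finite by Definition \ref{DefQM}. By the characterization recalled just before the proposition (a subset of a countable group with a left-invariant proper metric is finite if and only if it is contained in a ball of bounded radius centered at $e_H$), there is a constant $C\geq 0$ with $D(f)\subseteq \overline{B}(e_H,C)$. Every element $f(y)^{-1}f(x)^{-1}f(xy)$ lies in $D(f)$, so $d(f(y)^{-1}f(x)^{-1}f(xy),e_H)\leq C$ for all $x,y\in G$, which together with the equality above yields the asserted bound. Conversely, if such a $C$ exists, then $D(f)\subseteq \overline{B}(e_H,C)$; because $d$ is proper and $H$ is discrete, this closed ball is a finite set, hence $D(f)$ is finite and $f$ is a quasimorphism.

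I do not expect a genuine obstacle here: the content is entirely the interplay of left-invariance (for the equality) and properness together with discreteness (for the finite-versus-bounded dictionary). The only point to state with care is that the ball characterization is used in both directions, so that the single constant $C$ simultaneously witnesses the uniform bound and the finiteness of $D(f)$.
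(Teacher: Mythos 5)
Your proposal is correct and follows exactly the route the paper intends: the paper gives no detailed proof (it cites \cite[Proposition 2.47]{CHT}) but justifies the statement with precisely the observation you elaborate, namely that in a countable group with a left-invariant proper metric a subset is finite if and only if it lies in a bounded ball about $e_H$, combined with left-invariance for the displayed equality. Nothing is missing.
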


\begin{remark} There is some variety of terminology regarding the words ``quasimorphism'' and ``quasihomomorphism''. Throughout this paper we are using terminology of \cite{CHT}, i.e., this is where
the name \emph{quasimorphism} in Definition \ref{DefQM} is coming from. Other sources, like \cite{FujiKap} or \cite{Heuer1}, use the name \emph{quasihomomorphism} for the kind of function from Definition \ref{DefQM}.  In fact, in \cite{Heuer1}, \cite{FujiKap}, or \cite{Rolli}, for example, the name \emph{quasimorphism} is used for a function $f:G\to \R$ from a group $G$ to $\R$ for which there is a $C\geq 0$ such that $|f(xy)-f(x)-f(y)|\leq C$, for all $x,y\in G$. In \cite{Fujiwara1998} or \cite{BestvinaFujiwara} such functions $f:G\to \R$ are called \emph{quasi-homomorphisms}, while in \cite{CHT} such functions are referred to as \emph{real-valued topological quasimorphisms}. If we replace the codomain $\R$ of $f$ with $\Z$, this definition becomes equivalent to Definition \ref{DefQM}, by Proposition \ref{prop: quasi alter}. More about real-valued topological quasimorphisms can be found in \cite[Appendix B.5]{CHT}.
\end{remark}

Let us now look at some examples of quasimorphisms.

\begin{example}\label{ex: QM} Clearly, all group homomorphisms are also quasimorphisms. All functions between groups which have finite image are quasimorphisms. If $G$ is a group, $f:G\to \Z$ is a homomorphism and $b:G\to \Z$ is a bounded function, then $f+b:G\to \Z$ is a quasimorphism.
 In addition (see, for example, \cite{Heuer1}), new quasimorphisms can be constructed by taking any existing quasimorphism $\phi: G\to \Z$ from a group $G$ to $\Z$, taking a group $H$ which contains an infinite cyclic subgroup $K$ and taking a homomorphism $\tau:\Z \to H$ such that $\tau(\Z)=K$, to produce the composition $\tau\circ\phi:G\to H$, which is then a quasimorphism.
\end{example}

\begin{example}\label{ex: Rolli Brooks}
Let us see two types of quasimorphisms defined on free groups $F_r$ of rank $r\geq 2$, with codomain $\Z$. First, here is an adapted version of
P.~Rolli's quasimorphisms (\cite{Rolli}): let $S=\{s_1, \ldots , s_r\}$ be a free generating set of a free group $F_r$ of rank $r\geq 2$. For each function $\alpha:\Z\to \Z$ which is bounded and odd (i.e., $\alpha(-n)=-\alpha(n)$, for all $n\in \Z$), define a function $f_\alpha:F_r \to \Z$ by $f_\alpha(s_{i_1}^{n_1}\ldots  s_{i_k}^{n_k})\coloneqq \sum_{j=1}^{k} \alpha(n_j)$, using the fact that each non-trivial reduced word  $w$ in $F_r$ has a unique shortest factorization into powers $w=s_{i_1}^{n_1}\ldots  s_{i_k}^{n_k}$, for $s_{i_j}\in S$, $n_j\in\Z$. Then this $f_\alpha$ is a quasimorphism.

Secondly, we define Brooks' \emph{counting quasimorphisms} (\cite{Brooks}): as in the previous example, start with a free generating set $S=\{s_1, \ldots , s_r\}$ for $F_r$ of rank $r\geq 2$, and
choose some reduced word $w\in F_r\setminus\{e\}$. Then, for any reduced word $v\in F_r$, let $c_w(v)$ be the number of appearances of the word $w$ in $v$ and $c_{w^{-1}}(v)$ be the number of appearances of $w^{-1}$ in $v$. Then the function $h_w:F_r\to \Z$ defined by $h_w(v)\coloneqq c_w(v) - c_{w^{-1}}(v)$  is a quasimorphism (see, for example, \cite[Example B.43]{CHT} for more details). 
\end{example}
The following example is a generalization of Brooks' counting quasimorphisms (see \cite{Fujiwara1998} or \cite{BestvinaFujiwara}).
\begin{example}\label{ex: Fujiwara}
Let $X$ be a geodesic metric space, and let $G$ be a group acting on $X$ properly discontinuously by isometries. For a finite path 
$\alpha$ in $X$, let $i(\alpha)$, $t(\alpha)$ mark the initial and terminal points of $\alpha$, and let $\vert \alpha \vert$ be the length of $\alpha$. Use the action of $g\in G$ on $X$ to define a path $g(\alpha)$ with $g(i(\alpha))$, $g(t(\alpha))$ as its initial and terminal points (respectively), and note that $\vert g(\alpha)\vert =\vert\alpha\vert$.
Now let $w$ be a path in $X$ with $\vert w\vert \geq 2$. For any path $\alpha$ in $X$ define $\vert\alpha\vert_w$ to be the maximal number of non-overlapping copies of $w$ in $\alpha$. Also, whenever $x,y$ are points in $X$, denote by $\left[ x,y\right]$ some choice of a geodesic from $x$ to $y$, and define 
$$c_w(\left[ x,y\right])\coloneqq d(x,y) -\inf_\alpha (\vert\alpha\vert -\vert \alpha\vert_w),$$
where $\alpha$ ranges over all paths from $x$ to $y$. Finally, fix a base point $x_0\in X$ and define a function $h_w:G\to \Z$ by
$$h_w(g)\coloneqq c_w(\left[x_0,g(x_0)\right]) - c_{w^{-1}}(\left[x_0,g(x_0)\right]).$$
By \cite[Proposition 3.10]{Fujiwara1998}, when $X$ is Gromov-hyperbolic, this $h_w$ is a quasimorphism.

A special case of this construction can be found in \cite{EpsteinFujiwara}, where $G$ is a hyperbolic group and $X$ is its Cayley graph with respect to some finite generating set, and with $x_0=e$ as the base point.
\end{example}

\medskip

More examples of quasimorphisms can be found in  \cite{FujiKap} and  \cite{BrandVerbitsky}: \cite{FujiKap} deals with quasimorphisms that have discrete groups as codomains, while \cite{BrandVerbitsky} covers a more general theory of so-called \emph{Ulam quasimorphisms}.

\medskip

Recall that we would like to define quasimorphisms between approximate groups. First, let us define \emph{global morphisms}:

\begin{definition}
Let $(\Xi, \Xi^\infty)$ and $(\Lambda, \Lambda^\infty)$ be approximate groups.
A \emph{global morphism} $f: (\Xi, \Xi^\infty) \to (\Lambda, \Lambda^\infty)$ between approximate groups is a group homomorphism $f: \Xi^\infty \to \Lambda^\infty$ which restricts to partial homomorphisms $f_k:=f|_{\Xi^k} :\Xi^{k}\to \Lambda^k$ for each $k\in \N$, that is, for all $\xi_1, \xi_2 \in \Xi^k$ which satisfy $\xi_1\xi_2 \in \Xi^k$ we have $f_k(\xi_1\xi_2) = f_k(\xi_1)f_k(\xi_2)$.
Therefore we have that  $f(\Xi^k)=f(\Xi)^k\subseteq \Lambda^k$, for all $k \in \N$.
\end{definition}

\begin{definition} \label{def: QM}
Let $(\Xi, \Xi^\infty)$ and $(\Lambda, \Lambda^\infty)$ be approximate groups. A \emph{global quasimorphism} (or simply a \emph{quasimorphism}) between approximate groups is
 a function of pairs $f: (\Xi, \Xi^\infty) \to (\Lambda, \Lambda^\infty)$ 
such that $f: \Xi^\infty \to \Lambda^\infty$ is a quasimorphism in the sense of Definition \ref{DefQM}.
\end{definition}
Note that the defect set of a quasimorphism $f: (\Xi, \Xi^\infty) \to (\Lambda, \Lambda^\infty)$ is $D(f)$ of $f:\Xi^\infty\to \Lambda^\infty$.
Also, a  quasimorphism $f: (\Xi, \Xi^\infty) \to (\Lambda, \Lambda^\infty)$  of approximate groups satisfies $f(\Xi)\subseteq\Lambda$, and we use notation $f_1:=f|_\Xi:\Xi \to \Lambda$.
However, $f(\Xi^2)$ need not be contained in $\Lambda^2$, but since $D(f)$ is finite and therefore contained in $\Lambda^M$ for some $M \in \N$, from \eqref{Drho1} we get that $f(\Xi^2)\subseteq f(\Xi)^2D(f)\subseteq \Lambda^{M+2}$.

Every global morphism of approximate groups is an example of a symmetric and unital quasimorphism between them. 
Another way to produce examples of quasimorphisms of approximate groups is to take quasimorphisms between groups and restrict to approximate subgroups.

\section{Quasimorphisms and being coarsely Lipschitz}
What follows is a version of Lemma 3.8 from \cite{CHT}, stated for (general) quasimorphisms between countable approximate groups.

\begin{lemma}\label{coarsely-Lipschitz}
Let $(\Xi, \Xi^\infty)$ and $(\Lambda,\Lambda^\infty)$ be countable approximate groups and let $f:(\Xi, \Xi^\infty) \to (\Lambda,\Lambda^\infty)$ be a quasimorphism. Then the restriction $f_1=f|_\Xi: \Xi \to \Lambda$ is a coarsely Lipschitz map, with respect to the canonical metrics on $\Xi$ and $\Lambda$.
\end{lemma}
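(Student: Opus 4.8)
The plan is to verify the characterization of coarsely Lipschitz maps from Lemma \ref{characterization-CL}. Fix a canonical metric $d$ on $\Xi^\infty$ and a canonical metric $d'$ on $\Lambda^\infty$, so that $d_\Xi = d|_{\Xi\times\Xi}$ and $d_\Lambda = d'|_{\Lambda\times\Lambda}$. Recall that $f(\Xi)\subseteq\Lambda$, so for $\xi,\xi'\in\Xi$ we have $f_1(\xi),f_1(\xi')\in\Lambda$ and $d_\Lambda(f_1(\xi),f_1(\xi'))=d'(f(\xi),f(\xi'))$. Thus it suffices to produce, for each $t>0$, an $s>0$ such that $d_\Xi(\xi,\xi')\leq t$ forces $d'(f(\xi),f(\xi'))\leq s$.

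First I would reduce to finitely many ``directions''. Given $\xi,\xi'\in\Xi$ with $d(\xi,\xi')\leq t$, set $z:=\xi^{-1}\xi'$. Since $\Xi$ is symmetric, $z\in\Xi^2\subseteq\Xi^\infty$, and by left-invariance of $d$ we have $d(e_{\Xi^\infty},z)=d(\xi,\xi')\leq t$, so $z$ lies in the closed ball $S_t:=\overline B_d(e_{\Xi^\infty},t)$. Because $d$ is proper on the discrete group $\Xi^\infty$, the ball $S_t$ is a finite subset of $\Xi^\infty$; in particular $f$ is defined on all of $S_t$. The only remaining dependence on the pair $(\xi,\xi')$, beyond the choice of $z\in S_t$, is the base point $\xi$ itself, which I eliminate next using the defect set.

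By the definition of $D(f)$ in \eqref{LDefectSet}, applied with $x=\xi$ and $y=z$, the element $\delta:=f(z)^{-1}f(\xi)^{-1}f(\xi z)$ lies in $D(f)$, whence
$$ f(\xi)^{-1}f(\xi')=f(\xi)^{-1}f(\xi z)=f(z)\,\delta, \qquad \delta\in D(f). $$
Using left-invariance of $d'$ this gives
$$ d_\Lambda(f_1(\xi),f_1(\xi'))=d'\!\big(e_{\Lambda^\infty},\,f(z)\delta\big). $$
As $z$ ranges over the finite set $S_t$ and $\delta$ over the finite set $D(f)$, the products $f(z)\delta$ range over a finite subset of $\Lambda^\infty$, so by properness of $d'$ the quantity
$$ s:=\max\{\, d'(e_{\Lambda^\infty},\,f(z)\delta)\ :\ z\in S_t,\ \delta\in D(f)\,\} $$
is a finite constant, and $d_\Lambda(f_1(\xi),f_1(\xi'))\leq s$ whenever $d_\Xi(\xi,\xi')\leq t$.

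I do not anticipate a genuine obstacle here: the argument is essentially an organized bookkeeping of finiteness. The only points demanding care are that $f$ must be evaluated on $z\in\Xi^\infty$ (where it is defined) rather than on $\Xi$ alone, that one invokes the \emph{definition} of $D(f)$ rather than merely \eqref{Drho1} so that the same $\delta$ realizing the coset is explicitly a defect element, and that finiteness of both the ball $S_t$ (properness of $d$) and the defect set $D(f)$ is what makes their combined image $\{f(z)\delta\}$ bounded in the proper metric $d'$.
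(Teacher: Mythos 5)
Your proof is correct, and it reaches the conclusion by a cleaner decomposition than the paper's. Both arguments share the same three ingredients: left-invariance to translate the pair $(\xi,\xi')$ to the identity, properness of the canonical metric to make the ball $\overline B_d(e,t)$ finite, and finiteness of $D(f)$ to control the deviation from multiplicativity. The difference is in how the defect identity is applied. The paper substitutes $x=\eta^{-1}$, $y=\xi$ into \eqref{LDefectSet}, which relates $f(\eta^{-1})f(\xi)$ to $f(\eta^{-1}\xi)$; since $f$ is not assumed symmetric, it must then separately convert $f(\eta^{-1})^{-1}$ back into $f(\eta)$, costing the auxiliary estimate \eqref{EtaEta} and producing the final bound $3C+S$ via a three-term triangle inequality. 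You instead write $\xi'=\xi z$ with $z=\xi^{-1}\xi'$ and apply the defect definition with $x=\xi$, $y=z$, obtaining the exact identity $f(\xi)^{-1}f(\xi')=f(z)\delta$ with $\delta\in D(f)$; this never evaluates $f$ at an inverse, so the non-symmetry of $f$ causes no friction, and the bound is a single maximum over the finite set $\{f(z)\delta : z\in \overline B_d(e,t),\ \delta\in D(f)\}$. What your route buys is brevity and the elimination of the intermediate constants $C$ and $S$; what the paper's route buys is a collection of reusable inequalities (\eqref{EE}, \eqref{CC}, \eqref{EtaEta}) quantifying how far $f$ is from being symmetric and unital, which is thematically aligned with the rest of the paper even though it is not needed for this lemma alone.
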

\begin{proof}
Fix a left-invariant proper metric $d$ on $\Xi^\infty$ and $d'$ on $\Lambda^\infty$. According to Lemma \ref{characterization-CL}, it suffices to show that for any $t> 0$ there exists an $s> 0$ such that whenever $\xi, \eta \in \Xi$ satisfy $d(\xi, \eta)\leq t$, then $d'(f_1(\xi),f_1(\eta))\leq s$. Before showing this, let us first notice a couple of facts.

First, the defect set $D(f) = \{f(y)^{-1}f(x)^{-1}f(xy) \mid x, y \in \Xi^\infty\}$ is finite, so we can define 
$$C\coloneqq \max_{z\in D(f)} d'(z, e_\Lambda) = \max_{w\in D(f)^{-1}} d'(w, e_\Lambda),$$
where the last equality holds due to the left-invariance of the metric $d'$.

Also, for any $x,y\in\Xi$, from $f_1(y)^{-1}f_1(x)^{-1}f(xy)=f(y)^{-1}f(x)^{-1}f(xy) \in D(f)$ it follows that 
\begin{equation}\label{DD}
f(xy)^{-1}f_1(x)f_1(y)\in D(f)^{-1}.
\end{equation} 
If we take $x=y=e_\Xi$, formula \eqref{DD} gives us $f_1(e_\Xi)\in D(f)^{-1}$, so
\begin{equation}\label{EE}
d'(f_1(e_\Xi),e_\Lambda)\leq C.
\end{equation}
If we take any $\xi, \eta \in \Xi$ and put  $x=\eta^{-1}$, $y=\xi$ into formula \eqref{DD}, we get
$f(\eta^{-1}\xi)^{-1}f_1(\eta^{-1})f_1(\xi)\in D(f)^{-1}$, which gives us
\begin{equation}\label{CC}
d'(f_1(\eta^{-1})f_1(\xi), f(\eta^{-1}\xi))=d'(f(\eta^{-1}\xi)^{-1}f_1(\eta^{-1})f_1(\xi),e_\Lambda)\leq C.
\end{equation}

Therefore, taking $\xi=\eta\in \Xi$ and using $f(\eta^{-1}\eta)=f_1(e_\Xi)$ and the inequalities \eqref{CC} and \eqref{EE}, we get
\begin{eqnarray}\label{EtaEta}
d'(f_1(\eta), f_1(\eta^{-1})^{-1}) &=& d'(f_1(\eta^{-1})f_1(\eta), e_\Lambda)\nonumber \\
   &\leq& d'(f_1(\eta^{-1})f_1(\eta), f(\eta^{-1}\eta)) + d'(f_1(e_\Xi), e_\Lambda)\leq 2C.
\end{eqnarray}

\medskip

Finally, let us take a random $t>0$ and fix it. 

Since the metric $d$ on $\Xi^\infty$ is proper, the closed ball $\overline{B}(e_\Xi, t)\coloneqq \{\xi\in\Xi^\infty\ | \ d(e_\Xi, \xi)\leq t\}$ is compact, so it is finite, and therefore $f(\overline{B}(e_\Xi, t))$ is finite in $\Lambda^\infty$, which means that there exists an $S>0$ such that $f(\overline{B}(e_\Xi, t))\subseteq \overline {B'}(e_\Lambda, S)\coloneqq \{\lambda \in \Lambda^\infty \ | \ d'(e_\Lambda, \lambda)\leq S\}$.

Therefore, whenever we take any $\xi,\eta\in \Xi$ satisfying $d(\xi,\eta)=d(\eta^{-1}\xi, e_\Xi)\leq t$, we have 
\begin{equation}\label{oo}
d'(f(\eta^{-1}\xi), e_\Lambda)\leq S.
\end{equation}

Thus, for any  $\xi,\eta\in \Xi$ satisfying $d(\xi,\eta)\leq t$, using  inequalities \eqref{CC}, \eqref{oo} and \eqref{EtaEta},   we have
\begin{eqnarray*}
d'(f_1(\xi),f_1(\eta)) &\leq & d'(f_1(\xi),f_1(\eta^{-1})^{-1}) + d'(f_1(\eta^{-1})^{-1},f_1(\eta)) \nonumber \\
 & = & d'(f_1(\eta^{-1})f_1(\xi),e_\Lambda) +  d'(f_1(\eta^{-1})^{-1},f_1(\eta)) \nonumber \\
&\leq& d'(f_1(\eta^{-1})f_1(\xi),f(\eta^{-1}\xi)) +d'(f(\eta^{-1}\xi),e_\Lambda) + d'(f_1(\eta^{-1})^{-1},f_1(\eta)) \nonumber \\
&\leq & C+S+2C=3C+S,
\end{eqnarray*}
which finishes the proof.
\end{proof}
\begin{corollary}\label{cor: cor-Lip}
If $G$ and $H$ are countable groups and $f:G\to H$ is a quasimorphism, then $f: G \to H$ is a coarsely Lipschitz map, with respect to the canonical metrics on $G$ and $H$.
\end{corollary}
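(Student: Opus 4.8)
The plan is to deduce this corollary directly from Lemma \ref{coarsely-Lipschitz} by viewing each countable group as a (degenerate) countable approximate group. First I would recall from the example following Definition \ref{DefApGr} that any group $G$ is an approximate subgroup of itself: $G$ is symmetric and unital, and $G^2=G$, so $G$ is a $1$-approximate subgroup of $G$ whose enveloping group is $G^\infty=\bigcup_{k\in\N}G^k=G$. Thus, whenever $G$ and $H$ are countable groups, the pairs $(G,G)$ and $(H,H)$ are countable approximate groups.

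Next I would check that a quasimorphism $f:G\to H$ in the sense of Definition \ref{DefQM} is precisely a quasimorphism $f:(G,G)\to(H,H)$ of approximate groups in the sense of Definition \ref{def: QM}. Indeed, the latter is by definition a map of pairs whose underlying map $f:\Xi^\infty\to\Lambda^\infty$ is a group-quasimorphism; here $\Xi^\infty=G$ and $\Lambda^\infty=H$, so this is exactly the hypothesis that $f:G\to H$ is a quasimorphism. Applying Lemma \ref{coarsely-Lipschitz} to $f:(G,G)\to(H,H)$ then gives that the restriction $f_1=f|_\Xi=f|_G=f$ is coarsely Lipschitz with respect to the canonical metrics on $\Xi=G$ and $\Lambda=H$.

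It remains only to observe that these canonical metrics are the canonical metrics on the \emph{groups} $G$ and $H$. Since $\Xi^\infty=G$, the canonical metric on $\Xi=G$ is $d|_{G\times G}=d$ for a left-invariant proper metric $d$ on $G$, which is exactly a canonical metric on the group $G$ by Definition \ref{def: canonical coarse class}; the same applies to $H$. This bookkeeping — that the notion of canonical metric for a group and the notion for the distinguished piece $\Lambda$ of an approximate group coincide when $\Lambda=\Lambda^\infty$ — is the only point requiring any care, and it is immediate from the definitions, so there is essentially no obstacle to overcome. Hence $f:G\to H$ is coarsely Lipschitz with respect to the canonical metrics on $G$ and $H$.
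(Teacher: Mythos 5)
Your proposal is correct and follows exactly the paper's own argument: view $G$ and $H$ as the countable approximate groups $(G,G)$ and $(H,H)$ and apply Lemma \ref{coarsely-Lipschitz} to $f:(G,G)\to(H,H)$. The extra bookkeeping you include about the canonical metrics coinciding is accurate and harmless, just more detailed than the paper's one-line proof.
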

\begin{proof}
Note that $(G,G)$ and $(H,H)$ are countable approximate groups and use Lemma \ref{coarsely-Lipschitz} on the quasimorphism $f:(G,G)\to (H,H)$.
\end{proof}

\section{The main theorem}
Let us restate the main theorem, for convenience.
\begin{usethmcounterof}{main}
Let $(\Xi, \Xi^\infty)$ and $(\Lambda,\Lambda^\infty)$ be countable approximate groups and let $f:(\Xi, \Xi^\infty) \to (\Lambda,\Lambda^\infty)$ be a quasimorphism. Then 
$$\asdim \Xi \leq \asdim \Lambda + \asdim \left( f^{-1}\left( f(e_\Xi)\cdot D(f)^{-1}\cdot D(f)\right)\right), $$
where $D(f)$ denotes the defect set of $f$.
\end{usethmcounterof}

\begin{remark}
Since $f:(\Xi, \Xi^\infty) \to (\Lambda,\Lambda^\infty)$ need not be symmetric nor unital, $f_1(\Xi)=f(\Xi)$ need not be an approximate subgroup of $\Lambda^\infty$. Therefore the function $f_1=f|_\Xi :\Xi \to \Lambda$ need not be surjective, that is, $f_1(\Xi)$ need not be equal to $\Lambda$.
But since $\Lambda$ is an approximate subgroup of $\Lambda^\infty$ and so it is symmetric, we have that, for each $\xi\in \Xi$, from $f_1(\xi)\in f_1(\Xi)\subseteq \Lambda$ it follows that the inverse of $f_1(\xi)$, i.e., $f_1(\xi)^{-1}=f(\xi)^{-1}$ is contained in $\Lambda$. Also, both $f_1(e_\Xi)=f(e_\Xi)$ and $e_\Lambda$ are contained in $\Lambda$.
\end{remark}

\begin{proof}[Proof of Theorem \ref{main}]
Fix a left-invariant proper metric $d$ on $\Xi^\infty$ and $d'$ on $\Lambda^\infty$. 

We will use notation $B(\xi, r)$ and $B'(\lambda, r)$ for open balls with radius $r>0$ in $(\Xi^\infty,d)$ and $(\Lambda^\infty, d')$, respectively, centered at $\xi \in\Xi^\infty$, $\lambda\in \Lambda^\infty$. Note that by left-invariance of $d'$ we have $B'(\lambda,r)=\lambda B'(e_\Lambda,r)$, for all $\lambda \in \Lambda^\infty$.

By Lemma \ref{coarsely-Lipschitz}, the restriction $f_1=f|_\Xi:\Xi\to \Lambda$ is coarsely Lipschitz. Therefore,
by Corollary \ref{cor: AbstractHurewicz} it suffices to show that, for every $r>0$, the collection 
$$\mathbb{X}_r\coloneqq \{f_1^{-1}(B'(\lambda,r))\ | \ \lambda\in f_1(\Xi)\}=
\{f_1^{-1}(B'(\lambda,r)\cap f_1(\Xi))\ | \ \lambda\in f_1(\Xi)\}$$
satisfies the condition 
\begin{equation}\label{star}
\asdim \mathbb{X}_r  \overset{u}{\leq} \asdim f^{-1}\left( f(e_\Xi)\cdot D(f)^{-1}\cdot D(f)\right).
\end{equation}

First, fix a random $r>0$. Then, for every $\lambda\in f_1(\Xi)=f(\Xi)$, pick an element $\xi_\lambda\in f_1^{-1}(\lambda)\subseteq \Xi$.
Note that for any $\widetilde\lambda\in \Lambda$ we have  
\[(\widetilde\lambda \cdot B'(e_\Lambda,r))\cap\Lambda \subseteq \widetilde\lambda \cdot\left(B'(e_\Lambda,r)\cap\Lambda^2\right),\] since if $z=\widetilde\lambda b=\lambda'$, for some $b\in B'(e_\Lambda,r)$, $\lambda' \in\Lambda$, then $b=\widetilde\lambda^{-1}\lambda'\in \Lambda^2$.

Now, for any $\lambda\in f_1(\Xi)=f(\Xi)$, we have
\begin{eqnarray} \label{first long}
f_1^{-1}(B'(\lambda, r) \cap f_1(\Xi)) &=& f_1^{-1}(B'(\lambda, r) \cap \Lambda) =f_1^{-1}((\lambda\cdot B'(e_\Lambda, r)) \cap \Lambda) \nonumber \\
 &\subseteq& f_1^{-1}(\lambda\cdot (B'(e_\Lambda, r)\cap \Lambda^2)) \nonumber \\
&=& f_1^{-1}\left( f_1\left( \xi_\lambda\right)\cdot (B'(e_\Lambda, r)\cap \Lambda^2)\right) \nonumber \\
&\subseteq&  f^{-1}\left(f(\xi_\lambda)\cdot\left(B'(e_\Lambda, r) \cap \Lambda^2\right)\right) \nonumber
\\
&=& \{z \in \Xi^\infty \mid f(z) \in  f(\xi_\lambda)\cdot(B'(e_\Lambda, r) \cap \Lambda^2)\} \nonumber \\
&=& \{z \in \Xi^\infty \mid f(\xi_\lambda)^{-1}f(z) \in B'(e_\Lambda, r) \cap \Lambda^2\} \nonumber\\
&=& \{z \in \Xi^\infty \mid f(\xi_\lambda)^{-1}  f(\xi_\lambda^{-1})^{-1}f(\xi_\lambda^{-1}) f(z) \in B'(e_\Lambda, r) \cap \Lambda^2\}.
\end{eqnarray}

Note that since $f$ need not be symmetric,  $f(\xi_\lambda)^{-1}$ need not be equal to $f(\xi_\lambda^{-1})$, which is why we needed to introduce the last expression in the previous sequence of equations.

By \eqref{Drho-1} and \eqref{Drho1} we have 
$f(\xi_\lambda)^{-1}  f(\xi_\lambda^{-1})^{-1}\in D(f)f(e_\Xi)^{-1}$ and $f(\xi_\lambda^{-1}) f(z) \in f(\xi_\lambda^{-1}z)D(f)^{-1}$. Thus, for the last row in \eqref{first long} we can write, for some $d,d'\in D(f)$:
$$
f(\xi_\lambda)^{-1}  f(\xi_\lambda^{-1})^{-1}f(\xi_\lambda^{-1}) f(z)=d\cdot f(e_\Xi)^{-1} \cdot f(\xi_\lambda^{-1}z)\cdot d'^{-1}\in B'(e_\Lambda, r) \cap \Lambda^2,
$$
which means that $f(\xi_\lambda^{-1}z) \in f(e_\Xi)\cdot D(f)^{-1}\cdot (B'(e_\Lambda, r)\cap \Lambda^2)\cdot D(f)$.
Therefore $$\xi_\lambda^{-1} z \in f^{-1}\left(f(e_\Xi)\cdot D(f)^{-1}\cdot (B'(e_\Lambda, r)\cap \Lambda^2)\cdot D(f)\right), \text{so}$$ 
$$z\in \xi_\lambda\cdot f^{-1}\left(f(e_\Xi)\cdot D(f)^{-1}\cdot (B'(e_\Lambda, r)\cap \Lambda^2)\cdot D(f)\right).$$
Thus, by \eqref{first long} we have 
\begin{eqnarray}\label{almost}
f_1^{-1}(B'(\lambda, r) \cap f_1(\Xi)) &\subseteq& \{ z \in \Xi^\infty \mid f(\xi_\lambda)^{-1}f(z) \in B'(e_\Lambda, r) \cap \Lambda^2\} 
\nonumber \\
&\subseteq& \xi_\lambda\cdot f^{-1}\left(f(e_\Xi)\cdot D(f)^{-1}\cdot (B'(e_\Lambda, r)\cap \Lambda^2)\cdot D(f)\right).
\end{eqnarray}
Since left-multiplication by $\xi_\lambda$ yields an isometry of $\Xi^\infty$,  using properties of $\asdim$ in Lemma \ref{lem: asdim properties} we have just reduced proving \eqref{star} to proving that, for each $r>0$,
\begin{equation}\label{eq: main}
\asdim f^{-1}\left(f(e_\Xi)\cdot D(f)^{-1}\cdot (B'(e_\Lambda, r)\cap \Lambda^2)\cdot D(f)\right) \leq
\asdim  f^{-1}\left(f(e_\Xi)\cdot D(f)^{-1}\cdot D(f)\right).
\end{equation}
To see this, it is enough to show that for any $r>0$ there is an $R>0$ so that 
\begin{equation}\label{R nbhd}
f^{-1}\left(f(e_\Xi)\cdot D(f)^{-1}\cdot (B'(e_\Lambda, r)\cap \Lambda^2)\cdot D(f)\right) \subseteq N_R\left(f^{-1}\left(f(e_\Xi)\cdot D(f)^{-1}\cdot D(f)\right)\right),
\end{equation}
since, by Corollary \ref{cor: asdim prop},  $N_R\left(f^{-1}\left(f(e_\Xi)\cdot D(f)^{-1}\cdot D(f)\right)\right)$ and $ f^{-1}\left(f(e_\Xi)\cdot D(f)^{-1}\cdot D(f)\right)$ have the same asymptotic dimension.

First let us see, for any $r>0$, what the elements of $f(e_\Xi)\cdot D(f)^{-1}\cdot (B'(e_\Lambda, r)\cap \Lambda^2)\cdot D(f)$ are.
By properness of metric $d'$ on $\Lambda^\infty$, the ball $B'(e_\Lambda,r)$ is finite, so both $B'(e_\Lambda, r)\cap\Lambda$ and $B'(e_\Lambda, r)\cap\Lambda^2$ are finite and we may write, using $e_\Lambda\in \Lambda\subset \Lambda^2$,
\[
B'(e_\Lambda, r)\cap\Lambda=\{\lambda_1=e_\Lambda, \lambda_2, \ldots , \lambda_N\}\subset\{\lambda_1, \ldots, \lambda_N, \lambda_{N+1}, \lambda_{N+2}, \ldots , \lambda_{N+k}\}= B'(e_\Lambda, r)\cap\Lambda^2.
\]
We also know that $D(f)$ is finite, so we can write $D(f)=\{d_1, d_2, \ldots , d_m\}$. Now we can write 
\begin{eqnarray}\label{elements}
& & f(e_\Xi)\cdot D(f)^{-1}\cdot (B'(e_\Lambda, r)\cap \Lambda^2)\cdot D(f)= \nonumber \\
&=&
\{f(e_\Xi)d_i^{-1}\lambda_\ell d_j\mid d_i, d_j \in D(f),\lambda_\ell \in B'(e_\Lambda , r)\cap \Lambda^2\}\nonumber\\
&=&\{f(e_\Xi)\}\sqcup \{f(e_\Xi)d_i^{-1} d_j \mid d_i,d_j\in D(f), i\neq j\} \sqcup \nonumber \\
&\sqcup& \{f(e_\Xi) d_i^{-1} \lambda_\ell d_j \mid d_i,d_j\in D(f),  \lambda_\ell \in B'(e_\Lambda,r)\cap(\Lambda^2\setminus\{e_\Lambda\})\}.
\end{eqnarray}

Taking the preimage of this, we get
\begin{eqnarray}\label{elements preimage}
& & f^{-1}\left(f(e_\Xi)\cdot D(f)^{-1}\cdot (B'(e_\Lambda, r)\cap \Lambda^2)\cdot D(f)\right)= \nonumber \\
&=&
 f^{-1}(f(e_\Xi))\sqcup \left(\bigsqcup_{d_i,d_j\in D(f), i\neq j}f^{-1}(f(e_\Xi)d_i^{-1}d_j)\right) \sqcup \nonumber \\
&\sqcup& \left( \bigsqcup_{\substack{d_i,d_j\in D(f) \\ \lambda_\ell \in B'(e_\Lambda,r)\cap(\Lambda^2\setminus \{e_\Lambda\} )}}
f^{-1}(f(e_\Xi)d_i^{-1}\lambda_\ell d_j)\right)
\end{eqnarray}

Note that the first two parts of the union in \eqref{elements preimage} are already contained in $f^{-1}(f(e_\Xi) \cdot D(f)^{-1}\cdot D(f))$, so it remains to investigate $f^{-1}(f(e_\Xi)d_i^{-1}\lambda_\ell d_j)$, for $d_i,d_j\in D(f)$, $\lambda_\ell \in B'(e_\Lambda,r)\cap(\Lambda^2\setminus \{e_\Lambda\})$. Some of these preimages may be empty, but for the sake of the argument let us assume that none of them are empty, and let us choose a representative from each one, i.e., let
$$\xi_{ij\ell}\in f^{-1}(f(e_\Xi)d_i^{-1}\lambda_\ell d_j)\subseteq \Xi^\infty,  \text{ for } i,j\in\{1,\ldots , m\}, \ \ell\in\{2, \ldots , N+k\}.$$
Define $$R\coloneqq \max_{i,j,\ell} d(e_\Xi, \xi_{ij\ell}).$$
Note that if, for some $i,j,\ell$,  we have that $f^{-1}(f(e_\Xi)d_i^{-1}\lambda_\ell d_j)=\{\xi_{ij\ell} \}$, then
$$d(\xi_{ij\ell}, f^{-1}\left(f(e_\Xi)\cdot D(f)^{-1}\cdot D(f)\right))\leq d(\xi_{ij\ell}, e_\Xi)\leq R.$$
If $f^{-1}(f(e_\Xi)d_i^{-1}\lambda_\ell d_j)$ has more than one element, and if we take any $\eta_{ij\ell}\in f^{-1}(f(e_\Xi)d_i^{-1}\lambda_\ell d_j)\setminus\{\xi_{ij\ell}\}$, then by \eqref{Drho1} we have
$$f(\eta_{ij\ell}\xi_{ij\ell}^{-1})\in f(\eta_{ij\ell})f(\xi_{ij\ell}^{-1})D(f)=f(\xi_{ij\ell})f(\xi_{ij\ell}^{-1})D(f),$$
and applying \eqref{Drho1} again we get $f(\xi_{ij\ell})f(\xi_{ij\ell}^{-1})\in f(\xi_{ij\ell}\xi_{ij\ell}^{-1})D(f)^{-1}=f(e_\Xi)D(f)^{-1}$, so
$$f(\eta_{ij\ell}\xi_{ij\ell}^{-1})\in f(e_\Xi)\cdot D(f)^{-1}\cdot D(f).$$
Therefore 
$$d(\eta_{ij\ell}, f^{-1}\left(f(e_\Xi)\cdot D(f)^{-1}\cdot D(f)\right))\leq d(\eta_{ij\ell}, \eta_{ij\ell}\xi_{ij\ell}^{-1})=d(e_\Xi, \xi_{ij\ell}^{-1})
=d (\xi_{ij\ell},e_\Xi)\leq R.$$

Thusly each $f^{-1}(f(e_\Xi)d_i^{-1}\lambda_\ell d_j)$ from \eqref{elements preimage} is contained in $N_R\left(f^{-1}\left(f(e_\Xi)\cdot D(f)^{-1}\cdot D(f)\right)\right)$, so condition \eqref{R nbhd} is satisfied, which finishes the proof.
\end{proof}

As a special case of Theorem \ref{main}, we get:
\begin{corollary}\label{cor main}
If $f:G\to H$ is a quasimorphism of countable groups, then $$\asdim G\leq \asdim H +  \asdim\ \! \left(f^{-1}\left( f(e_\Xi)D(f)^{-1}D(f)\right)\right).$$
\end{corollary}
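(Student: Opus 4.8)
The plan is to reduce the whole statement to Corollary \ref{cor: AbstractHurewicz} by exploiting the coarsely Lipschitz restriction $f_1 = f|_\Xi$. First I would fix left-invariant proper metrics $d$ on $\Xi^\infty$ and $d'$ on $\Lambda^\infty$, so that $\asdim \Xi$ and $\asdim \Lambda$ are computed with respect to them. By Lemma \ref{coarsely-Lipschitz} the map $f_1 \colon \Xi \to \Lambda$ is coarsely Lipschitz, so by Corollary \ref{cor: AbstractHurewicz} it suffices to prove that for every $r > 0$ the family $\mathbb{X}_r = \{ f_1^{-1}(B'(\lambda, r)) \mid \lambda \in f_1(\Xi) \}$ satisfies $\asdim \mathbb{X}_r \overset{u}{\leq} \asdim f^{-1}(f(e_\Xi) D(f)^{-1} D(f))$. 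I would center the balls at points of $f_1(\Xi)$ rather than all of $\Lambda$, since a general quasimorphism need not be surjective onto $\Lambda$.

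Second, I would analyze a single member $f_1^{-1}(B'(\lambda, r) \cap f_1(\Xi))$. Using left-invariance $B'(\lambda, r) = \lambda B'(e_\Lambda, r)$ together with the symmetry of $\Lambda$, I obtain the inclusion $(\lambda B'(e_\Lambda, r)) \cap \Lambda \subseteq \lambda (B'(e_\Lambda, r) \cap \Lambda^2)$. Choosing a fixed preimage $\xi_\lambda \in f_1^{-1}(\lambda)$ lets me replace $\lambda$ by $f(\xi_\lambda)$ and then translate the entire preimage on the left by $\xi_\lambda$, which is an isometry of $\Xi^\infty$. By the monotonicity and coarse invariance of $\asdim$ in Lemma \ref{lem: asdim properties}, the desired uniform bound on $\mathbb{X}_r$ then reduces, independently of $\lambda$, to bounding $\asdim$ of the single set $f^{-1}\!\left(f(e_\Xi) D(f)^{-1} (B'(e_\Lambda, r) \cap \Lambda^2) D(f)\right)$.

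Third, I would pass from this $r$-dependent set to the target $f^{-1}(f(e_\Xi) D(f)^{-1} D(f))$. Since $B'(e_\Lambda, r)$ is finite by properness and $D(f)$ is finite, the set $f(e_\Xi) D(f)^{-1} (B'(e_\Lambda, r) \cap \Lambda^2) D(f)$ is finite, and its preimage splits as a finite union of fibers $f^{-1}(f(e_\Xi) d_i^{-1} \lambda_\ell d_j)$. The fibers with $\lambda_\ell = e_\Lambda$ already sit inside the target; for the finitely many remaining ones I would select one representative $\xi_{ij\ell}$ in each nonempty fiber and set $R = \max_{i,j,\ell} d(e_\Xi, \xi_{ij\ell})$. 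The defect-set relations \eqref{Drho1} then show that any second point of such a fiber differs from its representative by an element whose image lies in $f(e_\Xi) D(f)^{-1} D(f)$, placing every point of every such fiber within distance $R$ of the target set. By Corollary \ref{cor: asdim prop} the target and its $R$-neighborhood have equal asymptotic dimension, which closes the reduction.

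The main obstacle is the asymmetry of $f$: because $f$ is only a general quasimorphism, one has $f(\xi_\lambda)^{-1} \neq f(\xi_\lambda^{-1})$ in general, so the naive translation in the second step does not directly yield a controlled defect term. I would resolve this by inserting the identity factor $f(\xi_\lambda^{-1})^{-1} f(\xi_\lambda^{-1})$ and then applying \eqref{Drho-1} to the piece $f(\xi_\lambda)^{-1} f(\xi_\lambda^{-1})^{-1} \in D(f) f(e_\Xi)^{-1}$ and \eqref{Drho1} to the piece $f(\xi_\lambda^{-1}) f(z)$. This is exactly what pushes the extra factor $f(e_\Xi)$ into the target set and explains why the general (non-symmetric, non-unital) case requires $f^{-1}(f(e_\Xi) D(f)^{-1} D(f))$ in place of the $f^{-1}(D(f))$ appearing in the symmetric unital Theorem \ref{Thm-symm-quasimorphism}.
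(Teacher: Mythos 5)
Your proposal is correct, but it proves much more than it needs to: the argument you outline is, step for step, the paper's proof of Theorem \ref{main} itself (the reduction via Lemma \ref{coarsely-Lipschitz} and Corollary \ref{cor: AbstractHurewicz}, the translation by $\xi_\lambda$, the insertion of $f(\xi_\lambda^{-1})^{-1}f(\xi_\lambda^{-1})$ together with \eqref{Drho-1} and \eqref{Drho1}, and the fiber-by-fiber $R$-neighborhood argument). The paper instead obtains Corollary \ref{cor main} in one line, by regarding the countable groups $G$ and $H$ as the approximate groups $(G,G)$ and $(H,H)$ and invoking Theorem \ref{main} directly --- exactly the device already used in Corollary \ref{cor: cor-Lip}. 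Nothing in your write-up is wrong (in the group case the step $(\lambda B'(e_\Lambda,r))\cap\Lambda\subseteq\lambda(B'(e_\Lambda,r)\cap\Lambda^2)$ even becomes vacuous, since $\Lambda=\Lambda^2=H$), so the only real economy you miss is recognizing that the corollary is a formal specialization; also note that in the group setting the element $e_\Xi$ appearing in the statement should be read as $e_G$.
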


\begin{remark}
If $f:(\Xi,\Xi^\infty)\to(\Lambda,\Lambda^\infty)$ is a symmetric quasimorphism, i.e., if $f(x^{-1})=f(x)^{-1}$ for all $x\in \Xi^\infty$, then in the last line of expression \eqref{first long} in the proof of Theorem~\ref{main} we get $f(\xi_\lambda)^{-1}f(z)=f(\xi_\lambda^{-1})f(z)$, so the calculation that follows in the proof simplifies to $z\in \xi_\lambda\cdot f^{-1}\left((B'(e_\Lambda,r)\cap \Lambda^2)\cdot D(f)\right)$. This was used in the proof of Theorem \ref{Thm-symm-quasimorphism}, i.e.,  in \cite[Theorem 4.6]{Tonic} to show that for a symmetric and unital quasimorphism between countable approximate groups, one can put $\asdim f^{-1}(D(f))$ instead of 
$\asdim f^{-1}\left( f(e_\Xi)D(f)^{-1}D(f)\right)$ into the stated inequality of Theorem \ref{main}. Similarly, for a symmetric and unital quasimorphism of countable groups, one can make the same replacement in the formula of Corollary \ref{cor main}.
\end{remark}

\begin{remark}
Using definitions \ref{def: canonical coarse class} and \ref{def: asdim - group}, the expression $f^{-1}\left( f(e_\Xi)D(f)^{-1}D(f)\right)$ in Theorem \ref{main} and Corollary \ref{cor main} can be replaced by its canonical coarse class $\left[ f^{-1}\left( f(e_\Xi)D(f)^{-1}D(f)\right)\right]_c$.
\end{remark}

\medskip 

{\small Acknowledgment: This work has been supported by European Union – NextGenerationEU - \emph{Posebni problemi u algebarskim strukturama u teoriji brojeva i topologiji}.}

\end{document}